\title{\sc Holomorphic functions with  large cluster sets}
\author{Thiago R. Alves\thanks{This study was financed in part by the Coordenação de Aperfeiçoamento de Pessoal de Nível Superior - Brasil (CAPES) - Finance Code 001 and FAPEAM.}~ and Daniel Carando\thanks{Supported by CONICET-PIP 11220130100329CO and ANPCyT PICT 2015-2299.}}
\date{}
\newtheorem{thm}{Theorem}[section]
\newtheorem{lem}[thm]{Lemma}
\newtheorem{pps}[thm]{Proposition}
\theoremstyle{definition}
\newtheorem{dfn}[thm]{Definition}
\newtheorem{rem}[thm]{Remark}
\begin{document}

\maketitle

\begin{abstract}
We study linear and algebraic structures in sets of bounded holomorphic functions on the ball which have large cluster sets at every possible point (i.e., every point on the sphere in several complex variables and every point of the closed unit ball of the bidual in the infinite dimensional case). We show that this set is  strongly $\mathfrak{c}$-algebrable for all separable Banach spaces. For specific spaces including $\ell_p$ or duals of Lorentz sequence spaces, we have strongly $\mathfrak{c}$-algebrability and spaceability even for the subalgebra of uniformly continous holomorphic functions on the ball.\end{abstract}

\renewcommand{\thefootnote}{\fnsymbol{footnote}}
\footnotetext{\emph{Key words and phrases.} Holomorphic functions. Cluster sets. Lineability. Algebrability. Spaceability. Banach spaces.}
\footnotetext{\emph{2010 Mathematics Subject Classification.}  46J15, 32A40, 46G20.}

\section{Introduction and main results}

There is an increasing interest in the search  linear or algebraic structures in sets of functions with special (usually \emph{bad}) non-linear  properties. A seminal example of this kind of results is the construction in \cite{Gu91} of infinite dimensional subspaces of $C([0,1])$  containing only of nowhere differentiable functions (except the zero function). Since then, many efforts were devoted in this direction, especially in the last years. We refer the reader to \cite{AronBerPelSeo} for a complete monograph in the subject (see also \cite{AronMaestro, BerPelSeo}). In the setting of complex holomorphic functions, several authors considered linear and algebraic structures in sets of non-extendible functions (see \cite{Alves1,Ber2} and the references therein). Other distinguished properties of holomorphic functions were studied, for example, in  \cite{AlvBot18} or \cite{Lo-Sa}.

In this work, we study linear and algebraic structures in the set of holomorphic functions with \emph{large cluster sets at every point}, both for finite and infinitely many variables.
Cluster values and cluster sets of holomorphic functions in the complex disk $\mathbb D$ were first considered by I.~J.~Schark (a fictitious name chosen by eight brilliant mathematicians of the time) in~\cite{Schark61}. Their motivation was to relate the set of cluster values of a bounded function $f$ at a point in the unit circle $S$ with the set of evaluations $\varphi(f)$ of elements $\varphi$ in the spectrum of the algebra $H^\infty$ over that point. Different authors have studied the analogous problem in the infinite dimensional setting \cite{Aron4,Aron1,JoOr15,Ortega-Prieto18}.
Let us remark that for a bounded holomorphic function $f$ on $\mathbb D$, a large cluster set of $f$ at some $z_0\in  S$  means that $f$ has a wild behaviour as $z\to z_0$ (the cluster set consists of all limit values of $f(z)$ as $z\to z_0$). So, in the one dimensional case,  we are interested in those functions that have this wild behaviour at every point of the unit circle. This is our non-linear property, which can be rated as bad, in opposition to continuity at the boundary, which plays the r\^ole of the good property. Since a cluster set is a compact connected subset of $\mathbb C$, it is considered large whenever it contains a disk.

Let us begin in the context of several complex variables. We consider a norm $\|\cdot\|$ in $\mathbb C^n$ and the corresponding finite dimensional Banach space $E=(\mathbb C^n,\|\,\|)$. We write $B$ and $S$ for the open unit ball and the unit sphere of $E$, respectively. Let $\mathcal{H}^\infty(B)$ denote the algebra of all bounded holomorphic functions on $B$.
The cluster set of a function $f\in \mathcal{H}^\infty(B)$ at a point $z\in \overline B$ is the set $Cl(f,z)$ of all limits of values of $f$ along sequences converging to $z$. For $z$ in the open unit ball this cluster set contains just one point: $f(z)$; but for $z\in S$ the situation can be very different.

Our first main theorem is the following.

\begin{thm}\label{main-findim}
For $E=(\mathbb C^n,\|\,\|)$, the set of functions $f\in \mathcal{H}^\infty(B)$ such that there exists a (fixed) disk centered at the origin which is contained in $Cl(f,z)$ for every $z\in S$ is strongly $\mathfrak{c}$-algebrable.
\end{thm}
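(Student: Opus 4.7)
The plan is to apply the standard exponential method for strong algebrability, built on top of a single bounded holomorphic base function with uniformly wild boundary behaviour.

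\textbf{Step 1 (Base function).} First I would produce a function $g \in \mathcal{H}^\infty(B)$ and a radius $R>0$ with
\[ \overline{D(0,R)} \subset Cl(g,z) \qquad\text{for every } z \in S. \]
One natural construction is of interpolation type: fix a countable dense set $\{z_k\}\subset S$, a countable dense set $\{v_m\}\subset D(0,R)$, and for each pair $(k,m)$ pick a sequence $\zeta_{k,m,j}\to z_k$ inside $B$. Arrange the $\zeta_{k,m,j}$ into a single interpolating (Carleson-type) sequence in $B$ and prescribe bounded prescribed values that converge to $v_m$ along each subsequence approaching $z_k$; an interpolation theorem for $\mathcal{H}^\infty(B)$ then yields a bounded holomorphic $g$ realising these prescriptions. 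Since cluster sets are closed and $\{z_k\}$ is dense in $S$, a diagonal argument promotes the inclusion from the $z_k$'s to every $z\in S$.

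\textbf{Step 2 (Exponential generators).} Pick a $\mathbb{Q}$-linearly independent family $\{r_\alpha\}_{\alpha<\mathfrak{c}}\subset(0,\infty)$, which exists because any Hamel basis of $\mathbb{R}$ over $\mathbb{Q}$ has cardinality $\mathfrak{c}$, and set
\[ f_\alpha := e^{r_\alpha g}-1 \in \mathcal{H}^\infty(B). \]

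\textbf{Step 3 (Algebraic freeness).} Given a nonzero polynomial $P\in\mathbb{C}[X_1,\dots,X_k]$ without constant term and distinct indices $\alpha_1,\dots,\alpha_k$, set
\[ F(w):=P\bigl(e^{r_{\alpha_1}w}-1,\ldots,e^{r_{\alpha_k}w}-1\bigr), \]
so that $P(f_{\alpha_1},\dots,f_{\alpha_k})=F\circ g$. The substitution $Y_i=X_i+1$ turns $F(w)$ into $Q(e^{r_{\alpha_1}w},\dots,e^{r_{\alpha_k}w})$ for a nonzero polynomial $Q$. By $\mathbb{Q}$-linear independence of $\{r_{\alpha_i}\}$, distinct multi-indices yield distinct exponents in the expansion of $Q(e^{r_{\alpha_1}w},\dots,e^{r_{\alpha_k}w})$, so $F$ is a nontrivial linear combination of linearly independent entire functions, in particular $F\not\equiv 0$ and therefore $P(f_{\alpha_1},\dots,f_{\alpha_k})\not\equiv 0$.

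\textbf{Step 4 (Uniform cluster disk).} Keep $h=F\circ g$ as above. Since $P$ has no constant term we have $F(0)=0$, and by Step 3 $F$ is a nonconstant entire function. The open mapping theorem gives $F(D(0,R))\supset D(0,\rho_h)$ for some $\rho_h>0$ depending only on $h$. For every $z\in S$ and every $w\in\overline{D(0,R)}\subset Cl(g,z)$, pick $\zeta_n\to z$ with $g(\zeta_n)\to w$; continuity yields $h(\zeta_n)=F(g(\zeta_n))\to F(w)$, so $F(w)\in Cl(h,z)$. Hence $D(0,\rho_h)\subset F(D(0,R))\subset Cl(h,z)$ for every $z\in S$, proving $h$ belongs to the set in question.

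\textbf{Main obstacle.} Steps 2--4 run automatically once $g$ is in hand, so the whole argument is pinned on Step 1: producing a single bounded holomorphic function on $B$ whose cluster set at \emph{every} point of $S$ contains a common disk. In the one-variable case this is classical, but in several complex variables one needs an interpolation argument tailored to the geometry of the norm $\|\cdot\|$, and this is where I expect the genuine technical effort.
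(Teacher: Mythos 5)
Your proposal is correct, but it follows a genuinely different route from the paper. You build a single base function $g$ with a uniform cluster disk and then generate the free algebra by the exponential method ($f_\alpha=e^{r_\alpha g}-1$ with $\mathbb{Q}$-linearly independent exponents), exploiting that $F$ continuous and $w\in Cl(g,z)$ give $F(w)\in Cl(F\circ g,z)$, and that exponentials with distinct exponents are linearly independent. The paper instead constructs the whole continuum family $\{f_\xi\}_{\xi\in(0,1)}$ directly by interpolation, prescribing values $0$, $1$ and $a_N$ according to a nested system of partitions of $[k-1,k]$ with mesh $\xi^k$; this combinatorial scheme lets them hit any prescribed pattern $(C_{i_1}^N,\dots,C_{i_m}^N)$ simultaneously for finitely many $\xi$'s, and then an ordering argument on the monomial supports of $Q$ collapses $Q(f_{\xi_1},\dots,f_{\xi_m})$ to a nonzero one-variable polynomial $q(a_N)$, after which the open mapping theorem yields the disk. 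Concerning your flagged ``main obstacle'': the interpolation result you need is exactly Theorem~\ref{thmIterpolating} (Galindo--Miralles), whose hypothesis is purely radial, $(1-\|x_{k+1}\|)/(1-\|x_k\|)<c<1$, so no adaptation to the geometry of the norm $\|\cdot\|$ is required; with it, your Step 1 goes through verbatim, and your density-promotion from $\{z_k\}$ to all of $S$ is precisely the paper's Lemma~\ref{lem4}. What each approach buys: yours cleanly separates the analytic input (one wild function) from the soft algebraic machinery and is shorter once the base function exists; the paper's construction is self-contained, produces generators that are themselves interpolating-type functions, and transfers essentially unchanged to the infinite-dimensional setting of Theorem~\ref{main-infdim} (where your argument would also work, using weak-star convergent interpolating sequences supplied by the Josefson--Nissenzweig theorem).
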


We remark that the functions considered in the previous theorem satisfy more than having a disk in each cluster set: there exists a fixed disk which is contained in every cluster set. The same happens with Theorem~\ref{main-infdim} below.

Now we consider an infinite dimensional complex Banach space $E$.  The symbol $B_E$ (or $B$ if there is no ambiguity) represents the open unit ball of $E$, while $S_E$ (or $S$) represents the unit sphere. Also, we write  $B^{**} := B_{E^{**}}$ and $\overline{B}^{**}  := \overline{B_{E^{**}}}$, where $E^{**}$ denotes the topological bidual of $E$.

In this case, for $f\in \mathcal{H}^\infty(B)$ and  $z \in \overline{B}^{**}$, the {cluster set of $f$
at $z$} is the set $Cl(f,z)$ of all limits of values of $f$ along nets in $B$ weak-star converging to $z$. More precisely,
\begin{equation*}
\begin{split}
 Cl(f,z)=\{\lambda\in\mathbb C\,:\, \textrm{there exists a net } & (x_\alpha)\subset B \\
 & \textrm{ such that }x_\alpha\overset{w(E^{**},E^*)}{\longrightarrow}z,\textrm{ and } f(x_\alpha)\to\lambda\}.
\end{split}
\end{equation*}

Note that in the finite dimensional case this definition coincides with the previous one, since convergence and weak-star convergence are equivalent.
The cluster set  $Cl(f,z_0)$ can also be seen as the intersection of the closures of $f(U\cap
B)$, where $U$ ranges over  the weak-star neighborhoods of $z_0$ (or over some basis of weak-star neighborhoods). With this, it can be seen that the cluster set $Cl(f,z_0)$, $z_0\in \overline
B^{**}$, is a nonempty compact connected set.

In the infinite dimensional case, the cluster set can be large even at points in the interior of the ball. In fact, we have the following.

\begin{thm}\label{main-infdim}
If $E$ is a separable infinite-dimensional Banach space, then
the set of functions $f\in \mathcal{H}^\infty(B)$ such that there exists a (fixed) disk centered at the origin which is contained in $Cl(f,z)$ for every $z\in \overline{B}^{**}$  is strongly $\mathfrak{c}$-algebrable.
\end{thm}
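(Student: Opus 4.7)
The natural approach is the ``exponential trick'' routinely used in strong algebrability results. The plan is: first, produce a single generator $g\in \mathcal{H}^\infty(B)$ whose cluster set at every point $z\in \overline{B}^{**}$ contains a common disk $D_r:=\{\lambda\in \mathbb{C}:|\lambda|<r\}$ around the origin; second, take $\mathfrak{c}$-many exponentials of $g$ with $\mathbb{Q}$-linearly independent rates and check that the resulting family generates a free algebra all of whose nonzero elements lie in the distinguished set.

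For the first (and main) step, I would produce $g$ as a bounded holomorphic function that oscillates densely, in the weak-star sense, near every $z\in \overline{B}^{**}$. A natural candidate is a weighted holomorphic series $g=\sum_n c_n\, \Phi\circ \varphi_n$ built from a ``weak-star null'' system in $E^*$ (for instance, a Josefson--Nissenzweig sequence $(\varphi_n)\subseteq S_{E^*}$, or a suitable biorthogonal system adapted to $E$), composed with a fixed ``oscillation'' map $\Phi\colon \mathbb{D}\to \mathbb{D}$ and with scalars $c_n\to 0$ fast enough to force uniform convergence of the series on $B$. Given a basic weak-star neighborhood $U$ of $z$ governed by finitely many functionals $\psi_1,\dots,\psi_N\in E^*$, one uses that all but finitely many $\varphi_n$ are nearly orthogonal to the $\psi_i$'s to insert an analytic disk $\{x_0+\zeta u:\zeta\in \mathbb{D}\}\subseteq U\cap B$ along which one summand of $g$ attains, up to a small perturbation, every value of a subdisk of $\mathbb{C}$; a uniform normalization yields a fixed $r>0$ with $D_r\subseteq Cl(g,z)$ for every $z\in \overline{B}^{**}$.

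Granted such a $g$, the rest is standard. Fix $\Gamma\subseteq (0,\infty)$ of cardinality $\mathfrak{c}$ and $\mathbb{Q}$-linearly independent, and set $f_\alpha:=e^{\alpha g}-1 \in \mathcal{H}^\infty(B)$ for $\alpha\in \Gamma$. For a nonzero polynomial $P\in \mathbb{C}[y_1,\dots,y_n]$ without constant term and distinct $\alpha_1,\dots,\alpha_n\in \Gamma$, consider the entire function
\[
H(w):=P(e^{\alpha_1 w}-1,\dots,e^{\alpha_n w}-1).
\]
Binomial expansion presents $H$ as a $\mathbb{C}$-linear combination of the exponentials $e^{(j_1\alpha_1+\cdots+j_n\alpha_n)w}$ with pairwise distinct real exponents, so the classical linear independence of $\{e^{\beta w}\}_{\beta\in \mathbb{R}}$ forces $H\not\equiv 0$; in addition $H(0)=P(0,\dots,0)=0$. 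Since $P(f_{\alpha_1},\dots,f_{\alpha_n})=H\circ g$, the non-constancy of $g$ together with the discreteness of the zero set of $H$ gives $H\circ g\not\equiv 0$, so the generated algebra is free. Finally, for any $z\in \overline{B}^{**}$ and $w\in D_r\subseteq Cl(g,z)$, there is a net $x_\mu\to z$ in the weak-star topology with $g(x_\mu)\to w$, whence $H(w)\in Cl(H\circ g,z)$; thus $H(D_r)\subseteq Cl(P(f_{\alpha_1},\dots,f_{\alpha_n}),z)$, and since $H$ is nonconstant and entire with $H(0)=0$, the open mapping theorem yields a fixed disk $D_{r_P}\subseteq H(D_r)$ around $0$, valid for every $z$.

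The main obstacle is Step 1: producing $g$ with a uniform disk inside $Cl(g,z)$ for \emph{every} $z\in \overline{B}^{**}$, including interior points of the bidual ball, where one cannot simply rely on approach to the sphere. The construction must exploit both the infinite-dimensionality (so that any weak-star neighborhood of any $z$ intersected with $B$ supports analytic disks along which $g$ oscillates with bounded-below amplitude) and the separability of $E$ (to compress the countably many oscillation scales into a single summable series). Once this generator is in hand, Steps 2--3 are a direct application of the exponential trick and the open mapping theorem.
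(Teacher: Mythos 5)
Your reduction to a single generator (Steps 2--3, the exponential trick with $\mathbb{Q}$-linearly independent exponents) is sound and would indeed yield strong $\mathfrak{c}$-algebrability from any $g$ with a fixed disk in every cluster set; this is a genuinely different, and arguably cleaner, route than the paper's, which instead builds a continuum family $\{f_\xi\}_{\xi\in(0,1)}$ directly and verifies algebraic independence by a combinatorial argument on the exponents. The problem is Step 1, which you correctly identify as the heart of the matter but whose proposed construction cannot work. A function of the form $g=\sum_n c_n\,\Phi\circ\varphi_n$ with $\varphi_n\in S_{E^*}$, $\Phi\colon\mathbb{D}\to\mathbb{D}$ holomorphic and $\sum_n|c_n|<\infty$ necessarily has a \emph{singleton} cluster set at $z=0$ (and at any $z$ with $|\langle z,\varphi_n\rangle|<1$ for all $n$). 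Indeed, given $\varepsilon>0$ choose $N_0$ with $\sum_{n>N_0}|c_n|<\varepsilon$ and take the weak-star neighborhood $U=\{w\in E^{**}:|\langle w,\varphi_n\rangle|<\delta,\ n\le N_0\}$ of $0$, built from the \emph{same} functionals that drive the oscillation. Then $\varphi_n(U\cap B)\subseteq D_\delta(0)$ for $n\le N_0$, so by continuity of $\Phi$ at $0$ the diameter of $g(U\cap B)$ is at most $2\varepsilon+\sum_{n\le N_0}|c_n|\operatorname{diam}\Phi(D_\delta(0))$, which tends to $2\varepsilon$ as $\delta\to 0$. Hence $Cl(g,0)=\{\sum_n c_n\Phi(0)\}$. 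The point you are missing is that each summand depending on a single functional is ``tamed'' by the weak-star neighborhood defined by that very functional; oscillation at an interior point of $\overline{B}^{**}$ must occur along nets that are transversal to \emph{every} finite set of functionals simultaneously, and no uniformly convergent series of one-functional terms achieves this. (At boundary points there is a separate quantitative issue: the analytic disks $x_0+\zeta u$ that stay inside $U\cap B$ have radius at most $1-\|x_0\|$, which is forced to shrink with $U$ when $z\in S_{E^{**}}$, so the ``uniform normalization yielding a fixed $r>0$'' is also unjustified.)

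The paper circumvents all of this with interpolation rather than explicit series: by separability and Goldstine's theorem one picks a countable weak-star dense set $M=\{x_p^{**}\}\subset\overline{B}^{**}$; the Josefson--Nissenzweig theorem provides, for each $p$, a sequence in $S_{E^{**}}$ weak-star convergent to $x_p^{**}$; after scaling by $1-2^{-k}$ the resulting sequence is interpolating for $\mathcal{H}^\infty(B)$ by the Galindo--Miralles criterion (Theorem~\ref{thmIterpolating}), so one may freely \emph{prescribe} the values of the generators along nets converging weak-star to each $x_p^{**}$ to be dense in $\overline{\mathbb{D}}$. This yields fixed disks in the cluster sets at all points of $M$, and Lemma~\ref{lem4} upgrades this to all of $\overline{B}^{**}=\overline{M}^{w^*}$. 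If you replace your Step 1 by such an interpolation argument (producing one generator $g$ with $\tilde g$ taking a dense set of values of $\overline{\mathbb{D}}$ along each of these nets), your Steps 2--3 would then complete a correct alternative proof.
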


In fact, if $E$ is not separable, the same holds for the set of functions whose cluster set at every $z\in A$ contains a common disk centered at 0, for any $A\subset \overline{B}^{**}$ which is weak-star separable.

\medskip

Recall that $A_u(B)$ is the
Banach algebra of all uniformly continuous holomorphic functions on the unit ball
$B$. Equivalently, $A_u(B)$ is the closure in  $\mathcal{H}^\infty(B)$ of continuous polynomials. As a consequence of \cite[Corollary 2.5]{Aron4}, functions in $A_u(B_{\ell_p})$ have trivial cluster sets at points of $S_{\ell_p}$ for $1\le p<\infty$.
Moreover, as we see in Proposition~\ref{notinAu}, a function $f\in \mathcal{H}^\infty(B_{\ell_p})$ for which there exists a fixed disk contained in $Cl(f,z)$ for every $z\in {B_{\ell_p}}$ cannot belong to $A_u(B_{\ell_p})$ ($1\le p < \infty$). So we do not expect a result like Theorem~\ref{main-infdim} to hold for $A_u(B_{\ell_p})$. The same happens for some duals/preduals Lorentz sequence spaces (see Section~\ref{sec-elep} for the definitions). However, if we only ask  cluster sets at $z\in {B}$ to contain disks (whose radii depend on the point), we have both strongly  $\mathfrak{c}$-algebrability and spaceability.

\begin{thm}\label{main-ell_p} Let $E$ be either $\ell_p$ ($1\le p <\infty$) or $d(w,p)^*$ ($1< p <\infty$) or $d_*(w,1)$ with $w\in \ell_s$ for some $1<s<\infty$.
Then, the set of functions $f\in A_u(B_{E})$ whose cluster set at every $x\in B$  contains a disk is strongly  $\mathfrak{c}$-algebrable and contains {(up to the zero function)} an isometric copy of $\ell_\infty$. In particular, it is spaceable.
\end{thm}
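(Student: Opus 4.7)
The plan is to build a single $g \in A_u(B_E)$ whose cluster set at every interior point $x_0$ already contains a disk whose radius is controlled by the distance of $x_0$ to $S$, and then bootstrap $g$ to the full statement. For $E = \ell_p$ I would take $g(x) := \sum_{n=1}^\infty x_n^k$ with an integer $k \ge \max(2,p)$. Since $|x_n|^k \le |x_n|^p$ whenever $|x_n| \le 1$, one has $|g(x)| \le 1$, and a standard estimate (H\"older for $p > 1$, direct for $p = 1$) shows that $g$ is Lipschitz on $B_{\ell_p}$, so $g \in A_u(B_{\ell_p})$. Given $x_0 \in B$ and $s \in \mathbb{C}$ with $|s| < (1-\|x_0\|_p^p)^{1/p}$, set $x_\alpha := x_0 + (s - (x_0)_\alpha)\, e_\alpha$; since $(x_0)_\alpha \to 0$, we have $x_\alpha \in B$ for large $\alpha$, $x_\alpha \overset{w^*}{\to} x_0$, and $g(x_\alpha) \to g(x_0) + s^k$. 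Letting $s$ range over the open disk deposits a disk of radius $(1-\|x_0\|_p^p)^{k/p}$ around $g(x_0)$ inside $Cl(g,x_0)$. For the Lorentz-type spaces the same strategy will work with an analogous monomial series, where the hypothesis $w \in \ell_s$ provides the summability needed to keep $g$ in $A_u$ and the perturbation argument is adapted to the corresponding coordinate vanishing and weak-$*$ topology.

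\textbf{Step 2: strong $\mathfrak{c}$-algebrability.} Fix a $\mathbb{Q}$-linearly independent set $\Lambda \subset \mathbb{R}$ of cardinality $\mathfrak{c}$ and put $h_\lambda := e^{\lambda g} - 1 \in A_u(B_E)$. Given distinct $\lambda_1, \ldots, \lambda_r \in \Lambda$ and a nonzero polynomial $P$ without constant term, the substitution $z_j = e^{\lambda_j g}$ rewrites $P(h_{\lambda_1}, \ldots, h_{\lambda_r})$ as
\[
\sum_{\mathbf{m} \in \mathbb{Z}_{\ge 0}^r} d_{\mathbf{m}}\, e^{(m_1\lambda_1 + \cdots + m_r\lambda_r)\, g},
\]
with exponents $\mu_\mathbf{m} := m_1\lambda_1 + \cdots + m_r\lambda_r$ pairwise distinct by $\mathbb{Q}$-independence. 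Evaluating along the net of Step~1 this tends to the entire function $\Phi(s) := \sum_\mathbf{m} d_\mathbf{m}\, e^{\mu_\mathbf{m}(g(x_0)+s^k)}$. Were $\Phi$ constant in $s$, linear independence of exponentials with distinct exponents would force $d_\mathbf{m} = 0$ whenever $\mu_\mathbf{m} \ne 0$, and $\mathbb{Q}$-independence would then collapse $P$ to $0$, a contradiction. Hence $\Phi$ is a nonconstant entire function, and the open mapping theorem plants a disk inside $Cl(P(h_{\lambda_1}, \ldots, h_{\lambda_r}), x_0)$.

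\textbf{Step 3: an isometric copy of $\ell_\infty$.} Partition $\mathbb{N} = \bigsqcup_m I_m$ into infinite blocks and set $g_m(x) := \sum_{n \in I_m} x_n^k$. Each $g_m$ inherits the estimates from Step~1, so for $b = (b_m) \in \ell_\infty$ the series $f_b := \sum_m b_m g_m$ defines an element of $A_u(B_E)$. Testing at $x = t\,e_n$ with $n \in I_m$ and $|t| = 1$ yields $\|f_b\|_\infty \ge |b_m|$; the trivial upper bound gives equality, so $b \mapsto f_b$ embeds $\ell_\infty$ isometrically. If $b \ne 0$ and $b_{m_0} \ne 0$, running the Step~1 net with indices $\alpha \in I_{m_0}$ places a disk of radius $|b_{m_0}|(1-\|x_0\|_p^p)^{k/p}$ around $f_b(x_0)$ inside $Cl(f_b, x_0)$.

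\textbf{Main obstacle.} The principal difficulty I anticipate is Step~1 for $d(w,p)^*$ and $d_*(w,1)$: the rearrangement nature of those norms forces one to extract both coordinate vanishing of elements of $B$ and the admissible magnitude of a single-coordinate perturbation from the weight $w$, which is precisely where the hypothesis $w \in \ell_s$ with $1 < s < \infty$ enters. Once an appropriate monomial series is produced in $A_u$ for these spaces, the machinery of Steps~2 and~3 transfers verbatim.
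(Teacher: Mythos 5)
Your Steps 2 and 3 are a legitimate alternative route to the paper's: the paper builds $\mathfrak c$ many generators $f_{\Theta_i}$ from an almost disjoint family of subsets of $\mathbb N$ (with coefficients $a_j$ accumulating on all of $\overline{\mathbb D}$) and proves independence via a simultaneous perturbation condition, whereas you use the exponential method $h_\lambda=e^{\lambda g}-1$ over a $\mathbb Q$-independent set; both work, and for $1<p<\infty$ your Step~1 net $x_0+(s-(x_0)_\alpha)e_\alpha$ is sound, since the basis of $\ell_p$ is weakly null. The Lorentz cases you defer do go through along the same lines because $d(w,p)^*\hookrightarrow\ell_{p'}$ and $d_*(w,1)\hookrightarrow\ell_{s'}$ with norm one and their canonical bases are weakly null; this is exactly the ``weakly null basis with a lower $q$-estimate'' abstraction the paper uses.

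The genuine gap is $E=\ell_1$, which your whole construction rests on and which you do not flag as a problem. The cluster set at an interior point $x_0$ is computed along nets converging to $x_0$ in $w(E^{**},E^*)$, i.e.\ weakly in $\ell_1$; but $(e_\alpha)$ is \emph{not} weakly null in $\ell_1$ (pair it with $(1,1,1,\dots)\in\ell_\infty$), so your net $x_\alpha=x_0+(s-(x_0)_\alpha)e_\alpha$ does not converge weakly to $x_0$ and deposits nothing in $Cl(g,x_0)$. Worse, by the Schur property no \emph{sequence} of single-coordinate bumps of fixed size can be weakly null, so the fix cannot be a reindexing: one must pass to genuinely non-sequential nets. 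The paper's repair is to perturb by $(1-\|y\|_1)(e_s-e_t)/2$ with $s,t$ in the same block, using that $0$ lies in the $w(\ell_1,\ell_\infty)$-closure (via nets) of $\{(e_s-e_t)/2: s>t\ge N\}$, and to take the exponent $k$ \emph{even} so that the two contributions $(\pm(1-\|y\|_1)/2)^k$ add up to $2^{1-k}(1-\|y\|_1)^k$ rather than cancel. Without this (or an equivalent device) your Steps 1--3 all fail for $\ell_1$; your ``main obstacle'' paragraph points at the Lorentz spaces, but those are the easy adaptation --- $\ell_1$ is where the argument actually breaks.
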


We remark that the  copy of $\ell_\infty$ obtained in the previous theorem is actually contained in the subspace of $m$-homogeneous polynomials, where $m \ge p$ for $\ell_p$ ($1<p<\infty$), $m\ge p'$ for $d(w,p)^*$ ($1< p <\infty$), $m\ge s'$ for $d_*(w,1)$ and $m$ can be any even number for $\ell_1$.

Finally, we state the following spaceability result for the case $E=c_0$.

\begin{thm}\label{main-c_0}
The set of functions $f\in \mathcal{H}^\infty(B_{c_0})$ whose cluster set at every $x\in B_{c_0}$  contains a disk is strongly  $\mathfrak{c}$-algebrable and contains {(up to the zero function)} an almost isometric copy of $\ell_1$. In particular, it is spaceable.
\end{thm}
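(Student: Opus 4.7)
The strong $\mathfrak{c}$-algebrability is immediate from Theorem~\ref{main-infdim}. Since $c_0$ is separable, that theorem produces a strongly $\mathfrak{c}$-algebrable set $\mathcal{A}\subseteq \mathcal{H}^\infty(B_{c_0})$ consisting of functions with a fixed disk centered at the origin inside $Cl(f,z)$ for every $z\in\overline{B}^{**}$. Since $B_{c_0}\subseteq \overline{B}^{**}$, each such $f$ a fortiori has a disk inside $Cl(f,x)$ at every $x\in B_{c_0}$, so the set considered here contains $\mathcal{A}$ and inherits its strong $\mathfrak{c}$-algebrability. Hence the only substantive task is the $\ell_1$-copy.

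For that, the plan is to paste together disjoint ``block'' copies of a single wild function. Partition $\mathbb{N}=\bigsqcup_{n\ge 1} A_n$ into infinite sets and let $T_n\colon c_0\to c_0$ be the isometry sending a sequence to its restriction to $A_n$, reindexed along $\mathbb{N}$. Using Theorem~\ref{main-infdim} (and, if necessary, subtracting a constant and rescaling the construction that yields it), fix $g\in \mathcal{H}^\infty(B_{c_0})$ with $\|g\|_\infty=1$, $g(0)=0$, and $Cl(g,z)\supseteq D(0,r)$ for every $z\in\overline{B}^{**}$ and some $r\in(0,1]$ that can be made as close to $1$ as desired by refining the choice. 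Set $g_n:=g\circ T_n$ and, for $a=(a_n)\in\ell_1$, let
\[
f_a \;:=\; \sum_{n=1}^\infty a_n\, g_n,
\]
which converges uniformly on $B_{c_0}$ to an element of $\mathcal{H}^\infty(B_{c_0})$ with $\|f_a\|_\infty\le \|a\|_1$.

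For the matching lower bound, given $\varepsilon>0$ and $N$, choose for each $n\le N$ a point $y_n\in B_{c_0}$ with $\bigl|g(y_n)- r\,\overline{a_n}/|a_n|\bigr|<\varepsilon$ (possible because $D(0,r)\subseteq \overline{g(B_{c_0})}$), and define $z\in c_0$ by $T_n z=y_n$ for $n\le N$ and $T_n z=0$ for $n>N$. Then $\|z\|_\infty=\max_{n\le N}\|y_n\|_\infty<1$, so $z\in B_{c_0}$, and, using $g(0)=0$, we get $f_a(z)=\sum_{n\le N} a_n g(y_n)$, which differs from $r\sum_{n\le N}|a_n|$ by at most $\varepsilon\|a\|_1$. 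Letting $\varepsilon\to 0$ and $N\to\infty$ gives $\|f_a\|_\infty\ge r\|a\|_1$, so $a\mapsto f_a$ is a $(1/r)$-isomorphism onto its image and is almost isometric as $r\to 1$.

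It remains to verify that each $f_a$ with $a\neq 0$ has a disk in $Cl(f_a,x)$ at every $x\in B_{c_0}$. Pick $n_0$ with $a_{n_0}\neq 0$. For each $\lambda\in D(0,r)$, the inclusion $Cl(g,T_{n_0}x)\supseteq D(0,r)$ furnishes a net $(y^{(\beta)})\subseteq B_{c_0}$ with $y^{(\beta)}\xrightarrow{w^*} T_{n_0}x$ in $\ell_\infty$ and $g(y^{(\beta)})\to\lambda$. Define $z^{(\beta)}\in B_{c_0}$ by $T_{n_0}z^{(\beta)}=y^{(\beta)}$ and $T_n z^{(\beta)}=T_n x$ for $n\neq n_0$; a coordinate-wise check gives $z^{(\beta)}\xrightarrow{w^*} x$. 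Since $g_n(z^{(\beta)})$ is literally constant in $\beta$ for $n\neq n_0$, one finds
\[
f_a\bigl(z^{(\beta)}\bigr)\;\longrightarrow\; a_{n_0}\lambda + \sum_{n\neq n_0} a_n\, g_n(x),
\]
and varying $\lambda$ over $D(0,r)$ traces out a disk of radius $|a_{n_0}|r>0$ inside $Cl(f_a,x)$, as required. The main obstacle in the plan is extracting from the proof of Theorem~\ref{main-infdim} a function $g$ for which the cluster-set radius $r$ can be made arbitrarily close to $\|g\|_\infty$; this is precisely what enables the word ``almost'' in the conclusion, and spaceability is then automatic from the $\ell_1$-copy.
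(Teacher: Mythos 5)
Your reduction of the strong $\mathfrak{c}$-algebrability to Theorem~\ref{main-infdim} is correct ($c_0$ is separable, and the set in Theorem~\ref{main-c_0} contains the set in Theorem~\ref{main-infdim}), and your block scheme --- disjoint infinite sets of coordinates, one wild function per block, $\ell_1$-sums --- is in fact the same architecture the paper uses. The cluster-set computation for $f_a$ at a general $x\in B_{c_0}$ is also sound (coordinatewise convergence of a bounded net does give $w(\ell_\infty,\ell_1)$-convergence, and the tail of the series is controlled uniformly by $\|a\|_1$).

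The gap is exactly the one you flag and then leave open: the existence of $g\in\mathcal{H}^\infty(B_{c_0})$ with $\|g\|_\infty=1$, $g(0)=0$ and $D(0,r)\subseteq Cl(g,z)$ for all $z$, with $r$ arbitrarily close to $1$. This cannot be ``extracted from the proof of Theorem~\ref{main-infdim}'': that proof produces its functions by interpolating prescribed values on a sequence $(z_k^{**})$ via \cite[Corollary 8]{GalMir}, which controls the norm only up to the interpolation constant $M\ge 1$; after normalizing, the guaranteed cluster disk has radius of order $1/M$, and nothing in the construction lets $M\to 1$. Without this ingredient your map $a\mapsto f_a$ is at best an isomorphic embedding of $\ell_1$ (with constant $1/r$ for whatever $r>0$ you can actually certify), which yields spaceability but not the ``almost isometric'' clause. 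The paper fills precisely this hole with an explicit Blaschke-type product, $f_\Theta(x)=\delta^{-1}\prod_j (r_j-x_{n_j})/(1-r_j x_{n_j})$ with $\delta=\prod_j r_j$: its norm is at most $\delta^{-1}$, while perturbing single coordinates shows that its cluster set at $0$ contains a disk of radius $1$, so the relevant ratio is at least $\delta$, which can be made arbitrarily close to $1$. Note also that the paper does not require a \emph{fixed} disk at every point for this part --- at $y\ne 0$ the disk radius is allowed to degenerate with $y$ --- which is a strictly weaker (and achievable) demand than the one your plan imposes on $g$. A smaller issue: translating and rescaling a function from Theorem~\ref{main-infdim} to arrange $g(0)=0$ moves the center of the cluster disk away from the origin, and you have no a priori bound ensuring $|f(0)|$ is smaller than the disk radius, so even the ``some $r>0$'' version of your $g$ needs a word of justification.
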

\medskip
We refer to \cite{Dis} for background on Banach spaces and to \cite{dineen,Mujica} for background on complex analysis on infinite dimensional spaces.

\bigskip

\section{Proofs of Theorems \ref{main-findim} and \ref{main-infdim}}

We begin by recalling the definitions of lineability, algebrability and spaceability.

\begin{dfn}
	 Let $X$ be a topological vector space and $\kappa$ be a cardinal number. A subset $M \subset X$ is said to be {\it $\kappa$-lineable} (resp. {\it $\kappa$-spaceable}) if there is a $\kappa$-dimensional vector subspace (resp. $\kappa$-dimensional closed vector subspace) $Y$ of $X$ such that $Y \subset M \cup \{0\}$.
\end{dfn}
Note that the definition of lineable sets makes sense even if $X$ is only a vector space without any topology. Recall that if $\mathcal{A}$ is a complex commutative algebra, a subset $G = \{x_i : i \in I\}$ of $\mathcal{A}$ is said to be {\it algebraically independent} whenever the following holds: given $n\in \mathbb N$, if  $Q \in \mathbb{C}[z_1, \ldots, z_n]$ is a polynomial such that  $Q(x_{i_1}, \ldots, x_{i_n})=0$ for some $x_{i_1}, \ldots, x_{i_n} \in G$ with different indexes  $i_1, \ldots, i_n \in I$, then $Q$ must be 0.

\begin{dfn}
 Let $X$ be an arbitrary set, $\mathcal{A}$ be an algebra of functions $f \colon X \longrightarrow \mathbb{C}$ and $\kappa$ be a cardinal number. A subset $M \subset \mathcal{A}$ is said to be {\it strongly $\kappa$-algebrable} if there is a sub-algebra $\mathcal{B}$ of $\mathcal{A}$ which is generated by an infinite algebraically independent set of generators with cardinality $\kappa$ and such that $\mathcal{B} \subset M \cup \{ 0 \}$.
\end{dfn}

Given $x \in  \overline{B}^{**}$ and $M \subset  \overline{B}^{**}$, we consider the following set:
$${\mathcal{F}}_M(B) := \{f \in \mathcal{H}^\infty(B) : \cap_{x \in M}Cl(f,x) \mbox{ contains a disc centered at } 0\}.$$
With this notation, Theorems~\ref{main-findim} and \ref{main-infdim}  can be restated as follows.

\begin{thm}
 	For $E=(\mathbb C^n,\|\,\|)$, the set $\mathcal{F}_{S}(B)$ is strongly $\mathfrak{c}$-algebrable.
\end{thm}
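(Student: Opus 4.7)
The plan is to reduce the theorem to the construction of a single ``seed'' function with wild boundary behaviour and then apply the now-standard exponential trick to produce an algebra with $\mathfrak c$ algebraically independent generators. The main technical obstacle is the construction of the seed; the rest is formal.

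First, I would establish the existence of a function $g \in \mathcal{H}^\infty(B)$ and a radius $r > 0$ such that the disk $D(0,r)$ is contained in $Cl(g, z)$ for every $z \in S$. In the finite-dimensional convex setting this can be produced by an interpolation argument: take a countable dense set $\{a_k\}_{k \in \mathbb N} \subset S$, choose a sequence $(x_j)_j \subset B$ that accumulates along a subsequence at each $a_k$, arrange it (by radial thinning if necessary) to be interpolating for $\mathcal{H}^\infty(B)$, and interpolate a bounded sequence of target values whose restriction along each cluster converging to $a_k$ is dense in $\overline{D(0,r)}$. Density of $\{a_k\}$ in $S$ together with compactness of cluster sets upgrades the property from $\{a_k\}$ to all of $S$.

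Next, fix a $\mathbb Q$-linearly independent subset $H \subset \mathbb C$ with $|H| = \mathfrak c$ (for instance, a Hamel basis of $\mathbb C$ over $\mathbb Q$) and set $f_h := e^{hg} - 1 \in \mathcal{H}^\infty(B)$ for each $h \in H$. Every element of the non-unital algebra generated by $\{f_h\}_{h \in H}$ has the form $\Phi(g)$ with
$$\Phi(w) = P\bigl(e^{h_1 w} - 1, \dots, e^{h_n w} - 1\bigr),$$
where $P = \sum_\alpha c_\alpha x^\alpha$ is a complex polynomial without constant term. Expanding binomially one obtains $\Phi(w) = \sum_\beta C_\beta \, e^{(\sum_i h_i \beta_i) w}$, and the $\mathbb Q$-linear independence of $h_1, \dots, h_n$ makes the exponents $\sum_i h_i \beta_i$ pairwise distinct. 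Choosing $\beta^*$ componentwise maximal in the support of $P$, the only $\alpha$ in the support with $\alpha \geq \beta^*$ is $\beta^*$ itself, so $C_{\beta^*} = c_{\beta^*} \neq 0$. Since distinct exponents produce linearly independent exponentials, $\Phi \not\equiv 0$ whenever $P \not\equiv 0$; as $g$ is non-constant this simultaneously yields $\Phi(g) \not\equiv 0$ and the algebraic independence of $\{f_h\}_{h \in H}$.

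Finally, since $P$ has no constant term, $\Phi(0) = 0$; a nonzero entire function vanishing at the origin is non-constant, so by the open mapping theorem $\Phi(D(0,r))$ is an open set containing $0$ and thus some $D(0,\rho)$ with $\rho > 0$. Continuity of $\Phi$ gives $\Phi(Cl(g,z)) \subset Cl(\Phi(g), z)$ for every $z$, so
$$D(0,\rho) \subseteq \Phi\bigl(D(0,r)\bigr) \subseteq \bigcap_{z \in S} Cl(\Phi(g), z),$$
and hence $\Phi(g) \in \mathcal{F}_S(B)$. Therefore the non-unital algebra generated by $\{f_h\}_{h \in H}$ witnesses the strong $\mathfrak c$-algebrability of $\mathcal{F}_S(B)$.
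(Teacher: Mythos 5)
Your proof is correct, but it reaches the algebra by a genuinely different route than the paper. The paper does not build a single seed function and compose: it constructs the whole $\mathfrak c$-family $\{f_\xi\}_{\xi\in(0,1)}$ directly by interpolation (Lemma \ref{lem3}), using a combinatorial device (partitions of $[k-1,k]$ with mesh $\xi^k$) which guarantees that any prescribed pattern of values $(C_{i_1}^N,\dots,C_{i_m}^N)\in\{0,1,a_N\}^m$ is attained simultaneously by $f_{\xi_1},\dots,f_{\xi_m}$ along points converging to each $x_p$; a careful ordering of multi-indices then reduces $Q(f_{\xi_1},\dots,f_{\xi_m})$ to a one-variable polynomial $q(a_N)$ with $q(0)=0$, and the open mapping theorem applied to $q$ produces the disk. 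You instead interpolate only one function $g$ with $\overline{\mathbb D}\subset Cl(g,a_k)$ for a countable dense set $\{a_k\}\subset S$ (the same interpolating-sequence mechanism, via radial scaling as in Theorem \ref{thmIterpolating}), pass from $\{a_k\}$ to $S$ exactly as in Lemma \ref{lem4} (your appeal to ``compactness'' is unnecessary --- a diagonal extraction or the neighborhood description of $Cl$ suffices --- but the fact is true), and then generate the algebra by the exponential trick $f_h=e^{hg}-1$ over a $\mathbb Q$-linearly independent set $H$ of cardinality $\mathfrak c$. Your verification that the top coefficient $C_{\beta^*}=c_{\beta^*}$ survives (via a componentwise-maximal $\beta^*$ and $\mathbb Q$-independence of the frequencies) is sound, the identity $\Phi\bigl(Cl(g,z)\bigr)\subset Cl(\Phi\circ g,z)$ is the right functoriality, and the open mapping theorem for the entire function $\Phi$ (rather than for a polynomial $q$) yields the uniform disk $D(0,\rho)$. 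What each approach buys: yours is more modular and makes algebraic independence almost automatic from linear independence of exponentials, at the price of leaving the class of interpolated functions; the paper's construction is heavier combinatorially but keeps all generators as explicit interpolants and transfers verbatim to the infinite-dimensional statement of Lemma \ref{lem3}\ref{lem3-2}, which is why the authors organize the proof that way.
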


\begin{thm}
If $E$ is a separable infinite-dimensional Banach space, then $\mathcal{F}_{\overline{B}^{**}}(B)$ is strongly $\mathfrak{c}$-algebrable.
\end{thm}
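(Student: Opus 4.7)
My plan is to bootstrap the whole algebra from a single ``seed'' $\varphi \in \mathcal{H}^\infty(B)$ that is non-constant and satisfies $D(0,R) \subset Cl(\varphi, z)$ for some fixed $R > 0$ and every $z \in \overline{B}^{**}$. Given such a $\varphi$, I would pick a $\mathbb{Q}$-linearly independent set $H \subset \mathbb{R}_{>0}$ of cardinality $\mathfrak{c}$ (a Hamel basis of $\mathbb{R}$ over $\mathbb{Q}$ provides one), set
$$
\mathcal{G} := \{\, e^{r\varphi} - 1 : r \in H \,\} \subset \mathcal{H}^\infty(B),
$$
and argue that the (non-unital) subalgebra $\mathcal{B}$ generated by $\mathcal{G}$ realizes the strong $\mathfrak{c}$-algebrability: $\mathcal{G}$ is algebraically independent of cardinality $\mathfrak{c}$, and every nonzero element of $\mathcal{B}$ lies in $\mathcal{F}_{\overline{B}^{**}}(B)$.

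Constructing $\varphi$ is the main obstacle. Separability of $E$ together with the Josefson--Nissenzweig theorem produces a norm-one weak-star null sequence $(x_n^*) \subset E^*$ and, for each $n$, a vector $u_n \in B_E$ with $|x_n^*(u_n)| \geq 1/2$. I would then try a lacunary series $\varphi(x) = \sum_n c_n\, x_n^*(x)^{m_n}$ with summable $|c_n|$ (ensuring boundedness and holomorphy on $B$) and rapidly growing $m_n$, chosen so that perturbations of the form $z + \lambda u_n$ (for $n$ large and $\lambda$ in a suitable disk) remain in $B$, converge weak-star to $z$, and exhibit $\varphi$-values dominated by the $n$-th block modulo a tail that vanishes as $n \to \infty$. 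The delicate point is calibrating the $c_n$'s, the $m_n$'s, and the admissible range of $\lambda$ so that a \emph{single} disk $D(0,R)$ is swept out uniformly in $z$. Once the inclusion $D(0,R) \subset Cl(\varphi, z)$ is verified for $z \in B$, it extends automatically to $\overline{B}^{**}$: Goldstine's theorem gives a net $(z_\beta) \subset B$ with $z_\beta \to z$ weak-star, and any weak-star open neighborhood $U$ of $z$ eventually contains $z_\beta$, so $\overline{D(0,R)} \subseteq Cl(\varphi, z_\beta) \subseteq \overline{\varphi(U \cap B)}$ for all large $\beta$; intersecting over $U$ yields $\overline{D(0,R)} \subseteq Cl(\varphi, z)$.

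With $\varphi$ in hand, both remaining steps reduce to standard facts about exponential polynomials. For algebraic independence of $\mathcal{G}$, a hypothetical nontrivial relation $P(e^{r_1\varphi}-1,\ldots,e^{r_n\varphi}-1) \equiv 0$ becomes, via the invertible change of variables $y_i = z_i - 1$ and monomial expansion,
$$
g(\varphi) \equiv 0, \qquad g(w) := \sum_{k} c_k\, e^{(k \cdot r)\,w},
$$
with frequencies $k \cdot r = k_1 r_1 + \cdots + k_n r_n$ pairwise distinct by $\mathbb{Q}$-linear independence of $H$. Non-triviality of $P$ forces $g \not\equiv 0$ by linear independence of distinct exponential functions; but non-constancy of $\varphi$ makes $\varphi(B)$ open in $\mathbb{C}$, so the identity principle forces $g \equiv 0$, a contradiction. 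For the cluster set property, any nonzero element of $\mathcal{B}$ has the form $g(\varphi)$ for a nonzero entire exponential polynomial $g$ with $g(0) = 0$ (since $\mathcal{B}$ is non-unital, the underlying polynomial has no constant term). Hence $g$ has a zero of finite positive order at $0$, and the open mapping theorem provides $\rho = \rho(g) > 0$ with $D(0,\rho) \subset g(D(0,R))$. Finally, continuity of $g$ gives $g(Cl(\varphi,z)) \subset Cl(g(\varphi),z)$, so
$$
D(0,\rho) \subset Cl(g(\varphi), z) \qquad \text{for every } z \in \overline{B}^{**},
$$
and $g(\varphi) \in \mathcal{F}_{\overline{B}^{**}}(B)$, completing the proof.
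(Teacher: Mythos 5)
Your reduction of the theorem to the existence of a single ``seed'' $\varphi\in\mathcal H^\infty(B)$ with a fixed disk $D(0,R)$ in every cluster set is a genuinely different strategy from the paper's, and the superstructure you build on it is correct: the family $\{e^{r\varphi}-1: r\in H\}$ over a $\mathbb Q$-linearly independent set $H$ of cardinality $\mathfrak c$, the identification of nonzero elements of the generated non-unital algebra with $g(\varphi)$ for nonzero exponential polynomials $g$ with $g(0)=0$ (distinct frequencies via $\mathbb Q$-independence, non-triviality via the identity principle on the open set $\varphi(B)$), the inclusion $g(Cl(\varphi,z))\subset Cl(g(\varphi),z)$ combined with the open mapping theorem, and the Goldstine/weak-star-closure step (which is exactly the paper's Lemma~\ref{lem4}) are all fine. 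The gap is that the seed itself --- the entire analytic content of the theorem --- is never produced, and the construction you sketch cannot be repaired as stated.

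There are two concrete obstructions. First, your cluster nets have the form $z+\lambda u_n$ with $u_n\in B_E$ fixed vectors; for these to remain in $B$, converge weak-star to $z$, and still make $\varphi$ oscillate, you need $(\lambda u_n)$ to be weakly null in $E$ without being norm null. In a Schur space such as $E=\ell_1$ (separable and infinite dimensional, hence covered by the theorem) no such sequence exists: weakly null sequences are norm null, so $\varphi(z+\lambda u_n)\to\varphi(z)$ by norm continuity at the interior point $z$ and your sequences detect only the singleton $\{\varphi(z)\}$. This is precisely why the paper computes cluster sets along nets via the Aron--Berner extension and builds its points on the sphere of $E^{**}$, where the Josefson--Nissenzweig variant always supplies norm-one weak-star null sequences. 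Second, even where normalized weakly null $(u_n)$ exist (say $\ell_2$), the calibration is self-defeating: boundedness of $\varphi=\sum_n c_n (x_n^*)^{m_n}$ on $B$ forces $\sum_n|c_n|<\infty$, while the values swept by the $n$-th block at a fixed $z$ lie in $c_n\cdot\{w^{m_n}: |w|\le |x_n^*(z)|+|x_n^*(u_n)|(1-\|z\|)\}$, a set of diameter at most $2|c_n|\theta^{m_n}$ with $\theta\le 1$ and in fact $\theta<1$ whenever $\|z\|>0$ or $|x_n^*(u_n)|<1$; hence the disks you sweep shrink to a point as $n\to\infty$ and no $R>0$ survives, not even for a single fixed interior $z\ne 0$, let alone uniformly over $\overline B{}^{**}$. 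The missing ingredient is the paper's Lemma~\ref{lem3}: an interpolating sequence $(z_k^{**})\subset B^{**}$ obtained from the Galindo--Miralles criterion lets one \emph{prescribe} the values $\tilde f(z_k^{**})$ to run through a dense subset of $\overline{\mathbb D}$ along sequences weak-star converging to each point of a countable set, which is what actually forces a fixed disk into every cluster set. If you import such a seed (any single non-constant function from the paper's construction would do), your exponential argument then gives a clean alternative to the paper's combinatorial Assertion $(\ast)$.
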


Before proving these theorems, we recall that each function $f \in \mathcal{H}^\infty(B)$ can be extended to a function $\tilde{f} \in \mathcal{H}^\infty(B^{**})$ by means of the Aron-Berner extension. This extension is a multiplicative linear isometry (see \cite{DavGam}).
With this, we have the following characterization of the cluster set given in  \cite[p. 2357]{Aron1}:
\begin{eqnarray} \label{EqCl}
 Cl(f,z) = \{\mu \in \mathbb{C} :  \mbox{there exists a net } (x_\alpha) \subset B^{**},~ x_\alpha\overset{w(E^{**},E^*)}{\longrightarrow} z \mbox{ and } \tilde{f}(x_\alpha) \rightarrow \mu \}
\end{eqnarray}
for each $f \in \mathcal{H}^\infty(B)$.

We also recall that a sequence $(x_k^{**})$ in $B^{**}$ is an {\it interpolating sequence} for $\mathcal{H}^\infty(B)$ if for any sequence $(\lambda_k) \in \ell_\infty$, there is ${f} \in \mathcal{H}^\infty(B)$ such that $\tilde{f}(x_k) = \lambda_k$. The following theorem from \cite{GalMir} gives a sufficient condition for $(x_k^{**}) \subset B^{**}$ to be interpolating for $\mathcal{H}^\infty(B)$.

\begin{thm} \cite[Corollary 8]{GalMir} \label{thmIterpolating}
	Let $(x_k^{**})$ be a sequence in $B^{**}$ and $0 < c < 1$ such that $(1-\|x_{k+1}^{**}\|)/(1-\|x_k^{**}\|) < c$.
	Then $(x_k^{**})$ is an interpolating sequence for $\mathcal{H}^\infty(B)$.
\end{thm}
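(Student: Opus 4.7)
My plan is to reduce the interpolation problem in $\mathcal H^\infty(B)$ to the classical Carleson interpolation theorem on the unit disc, by transferring the geometry of $(x_k^{**})\subset B^{**}$ to a scalar sequence via norming functionals and then lifting a disc solution back to $B$ through the Aron-Berner extension.

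The first step is to pick, for each $k$, a functional $\varphi_k\in S_{E^*}$ with $\langle x_k^{**},\varphi_k\rangle$ as close as possible to $r_k:=\|x_k^{**}\|$; multiplying by a unimodular constant, we arrange $\langle x_k^{**},\varphi_k\rangle=r_k$ real and positive (a small Goldstine-type loss, which does not disturb the geometric decay of $1-r_k$, handles the case where the supremum is not attained on $S_{E^*}$). The hypothesis $(1-r_{k+1})/(1-r_k)<c<1$ is then exactly the Hayman--Newman sufficient condition for $(r_k)\subset(0,1)$ to be a Carleson interpolating sequence for $H^\infty(\mathbb D)$, so for any $(\lambda_k)\in\ell_\infty$ there is $g\in H^\infty(\mathbb D)$ with $g(r_k)=\lambda_k$ and $\|g\|_\infty\lesssim\|(\lambda_k)\|_\infty$.

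The main construction then builds the interpolant in $\mathcal H^\infty(B)$ as a series
\[
f(x)=\sum_{k}\mu_k\,P_k(x),
\]
where each $P_k\in\mathcal H^\infty(B)$ is a peak-type function built from a high power of $\varphi_k$ together with M\"obius-style correction factors in the previous functionals $\varphi_1,\dots,\varphi_{k-1}$ designed to force $\tilde P_k(x_j^{**})=0$ for $j<k$; the coefficients $\mu_k$ are then determined inductively to achieve $\tilde f(x_k^{**})=\lambda_k$ exactly. The multiplicativity and weak-star continuity of the Aron-Berner extension allow termwise evaluation at each $x_k^{**}$, while the geometric decay of $1-r_k$ simultaneously controls the convergence of the series on $B$ and the growth of the inductive corrections $\mu_k$.

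The delicate point, and the step I expect to be the main obstacle, is choosing the exponents in each $P_k$ so that $\tilde P_k(x_k^{**})$ stays bounded below by an absolute constant while $P_k$ remains uniformly bounded on $B$: all one knows a priori about the cross evaluations $\varphi_j(x_k^{**})$ for $j<k$ is the trivial bound $|\varphi_j(x_k^{**})|\le r_k$, and extracting a genuine lower bound at $x_k^{**}$ from the M\"obius corrections is exactly where the Hayman--Newman separation of the $r_k$ has to be used in an essential way. A clean implementation might, in the spirit of Carleson's original disc argument, replace the direct inductive cancellation by a Cauchy-type iteration that passes from an approximate separation estimate to exact interpolation.
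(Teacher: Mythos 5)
First, note that the paper does not actually prove this statement: it is imported verbatim from \cite[Corollary~8]{GalMir}, so the comparison must be with the proof there. Your overall architecture --- norming functionals $\varphi_k\in S_{E^*}$ with $x_k^{**}(\varphi_k)$ arbitrarily close to $r_k$ (this is just the definition of the bidual norm; Goldstine is not needed), building blocks of the form (M\"obius or peak function)$\,\circ\,\varphi_k$ whose Aron--Berner extensions are evaluated via $x^{**}\mapsto x^{**}(\varphi_k)$, cross terms controlled using only $|x_m^{**}(\varphi_k)|\le r_m$, and a successive-approximation pass from approximate to exact interpolation --- is indeed the right framework and is in the spirit of Galindo--Miralles. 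But your concrete construction is oriented the wrong way, and in that orientation its two central estimates are false, not merely delicate.

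You place the exact M\"obius zeros at the \emph{past} points ($\tilde P_k(x_j^{**})=0$ for $j<k$) and hope that high powers of the peak in $\varphi_k$ damp the \emph{future} cross evaluations. Both halves fail. For the diagonal: $\tilde P_k(x_k^{**})$ contains the factors $\frac{r_j-\widetilde{\varphi_j}(x_k^{**})}{1-r_j\widetilde{\varphi_j}(x_k^{**})}$ with $j<k$, and since $r_j<r_k$ the zero $r_j$ lies \emph{inside} the disc $\{|w|\le r_k\}$ that contains $\widetilde{\varphi_j}(x_k^{**})$; nothing prevents the factor from vanishing. Concretely, in $E=(\mathbb{C}^2,\|\cdot\|_\infty)$ take $x_1=(r_1,0)$ with $\varphi_1=e_1^*$ and $x_2=(r_1,r_2)$ with $\varphi_2=e_2^*$: then $\varphi_1(x_2)=r_1$, so $\tilde P_2(x_2)=0$ and your inductive solve for $\mu_2$ divides by zero. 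For the off-diagonal: if $m>k$ and $\widetilde{\varphi_k}(x_m^{**})=r_m$ (again not excluded by anything), the normalized peak satisfies $\bigl|\frac{1-r_k^2}{1-r_kr_m}\bigr|=\frac{1+r_k}{1+r_k(1-r_m)/(1-r_k)}\ge\frac{1+r_k}{1+c^{m-k}}>1$, so high powers \emph{grow} rather than decay --- the trivial bound $|\widetilde{\varphi_k}(x_m^{**})|\le r_m\to 1$ gives no smallness in that direction, exactly as you feared. The estimates close only with the roles reversed: put the exact Blaschke zeros at the \emph{future} indices $j>k$, where $r_j>r_k$ and $|\widetilde{\varphi_j}(x_k^{**})|\le r_k$ force each factor to satisfy
\begin{equation*}
\Bigl|\frac{r_j-\widetilde{\varphi_j}(x_k^{**})}{1-r_j\widetilde{\varphi_j}(x_k^{**})}\Bigr|\;\ge\;\frac{r_j-r_k}{1-r_jr_k}\;\ge\;\frac{1-c^{\,j-k}}{1+c^{\,j-k}},
\end{equation*}
so that $|\tilde P_k(x_k^{**})|\ge\prod_{m\ge1}\frac{1-c^m}{1+c^m}>0$ uniformly; and use the peak to damp the \emph{past} indices, where the trivial bound does yield geometric decay, $\bigl|\frac{1-r_k^2}{1-r_k\widetilde{\varphi_k}(x_m^{**})}\bigr|\le\frac{2(1-r_k)}{1-r_m}\le 2c^{\,k-m}$ for $m<k$. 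This is precisely where the Hayman--Newman separation enters in \cite{GalMir}; with this reversal (and the usual quantitative bookkeeping when $c$ is close to $1$, e.g.\ splitting into finitely many subsequences with smaller ratio) your Cauchy-type iteration can be made to converge, but as written the proposal's main step is not repairable by choosing exponents.
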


Now we set some notations. Given $N$, let $\alpha = (\alpha_1, \ldots, \alpha_N)$ be a $N$-tuple of nonnegative integers. As usual we denote by $supp(\alpha) := \{i : \alpha_i \not= 0\}$ the support of $\alpha$. And for each variable $X = (X_1, \ldots, X_N)$ we set $X^\alpha := X_1^{\alpha_1} \cdots  X_N^{\alpha_N}$ with the usual convention $X_i^0 = 1$. With this notation we may write any polynomial $Q \in \mathbb C[X_1, \ldots, X_N]$ as
\begin{eqnarray*} 
	Q(X) = \sum_{\alpha \in \Lambda} c_\alpha X^\alpha,
\end{eqnarray*}
where $\Lambda \subset \mathbb N_0^N$ is a finite set and $c_\alpha \in \mathbb C\setminus \{0\}$ for each $\alpha\in \Lambda$.

The proof of Theorems~\ref{main-findim} and \ref{main-infdim} will be splitted into two lemmas.

\begin{lem} \label{lem3}
\begin{enumerate} [label=(\roman*)]
\item \label{lem3-1} Let $E=(\mathbb C^n,\|\,\|)$ and
 $M \subset S$ be a countable set. Then the set $\mathcal{F}_M(B)$ is strongly $\mathfrak{c}$-algebrable.

\item \label{lem3-2}
	Let $E$ be an infinite-dimensional complex Banach space, and let $M \subset \overline{B}^{**}$ be a countable set. Then the set $\mathcal{F}_M(B)$ is strongly $\mathfrak{c}$-algebrable.
\end{enumerate}
\end{lem}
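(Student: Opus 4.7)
The plan is to prove both parts simultaneously by producing a single ``master'' function $g \in \mathcal{H}^\infty(B)$ satisfying $\overline{\mathbb{D}} \subset Cl(g,z)$ for every $z \in M$, and then generating the algebra from the family $\phi_r := e^{\alpha_r g} - 1$, indexed by a $\mathbb{Q}$-linearly independent family $\{\alpha_r\}_{r \in J} \subset \mathbb{R}$ of cardinality $\mathfrak{c}$ (for instance, a Hamel basis of a $\mathfrak{c}$-dimensional $\mathbb{Q}$-subspace of $\mathbb{R}$). The subtraction of $1$ is crucial: for any polynomial $Q\in\mathbb{C}[Y_1,\ldots,Y_N]$ without constant term, $Q(\phi_{r_1}(x),\ldots,\phi_{r_N}(x))=0$ whenever $g(x)=0$, and this vanishing at a point in the cluster set will force a fixed disk around $0$ in the image via the open mapping theorem.

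To build $g$, enumerate $M = \{z_i\}_{i\in\mathbb{N}}$ and, for each $i$, produce a sequence in $B^{**}$ with norms tending to $1$ and weak-star converging to $z_i$. In the finite-dimensional case this is automatic since $z_i \in S$; in the separable infinite-dimensional case one combines Goldstine's theorem with a Josefson--Nissenzweig-type weak-star null sequence in $S_{E^{**}}$ (together with a rescaling of the form $(1-c^n)y_n^{**}$) to approach $z_i$ weak-star from the open ball while controlling norms. A diagonal argument then produces one sequence $(x_k^{**})\subset B^{**}$ satisfying $(1-\|x_{k+1}^{**}\|)/(1-\|x_k^{**}\|) < c$ for some fixed $c \in (0,1)$, and such that for each $i$ there is an infinite subset $A_i \subset \mathbb{N}$ along which $x_k^{**}$ weak-star converges to $z_i$. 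Partitioning $\mathbb{N}=\bigsqcup_i A_i$ and choosing $(\lambda_k)\in\ell_\infty$ with $(\lambda_k)_{k\in A_i}$ dense in $\overline{\mathbb{D}}$ for every $i$, Theorem~\ref{thmIterpolating} furnishes $g \in \mathcal{H}^\infty(B)$ with $\tilde g(x_k^{**}) = \lambda_k$, and then \eqref{EqCl} gives $\overline{\mathbb{D}}\subset Cl(g,z_i)$ for every $i$.

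For algebraic independence of $\{\phi_r\}_{r\in J}$, let $\hat Q(Y_1,\ldots,Y_N) := Q(Y_1-1,\ldots,Y_N-1)$; since $Y_i\mapsto Y_i-1$ is an automorphism of $\mathbb{C}[Y_1,\ldots,Y_N]$, one has $Q=0 \iff \hat Q=0$. Writing $\hat Q(Y)=\sum_k d_k Y^k$, the entire function $F(w) := \hat Q(e^{\alpha_{r_1}w},\ldots,e^{\alpha_{r_N}w}) = \sum_k d_k e^{\sigma_k w}$ has pairwise distinct exponents $\sigma_k=\sum_i k_i \alpha_{r_i}$ by $\mathbb{Q}$-linear independence, so linear independence of distinct complex exponentials gives $F\equiv 0 \iff$ all $d_k=0 \iff \hat Q=0 \iff Q=0$. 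If $Q(\phi_{r_1},\ldots,\phi_{r_N})\equiv 0$ on $B$, then $F\circ g\equiv 0$; since $g$ is nonconstant holomorphic, $g(B)$ is open in $\mathbb{C}$, hence $F\equiv 0$ and thus $Q=0$.

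Finally, to verify the cluster set property, fix a nonzero $Q$ without constant term and set $P(w) := Q(e^{\alpha_{r_1}w}-1,\ldots,e^{\alpha_{r_N}w}-1)$; by the previous paragraph $P$ is a nonconstant entire function, and $P(0)=Q(0,\ldots,0)=0$, so by the open mapping theorem $P(\mathbb{D})$ is open and contains $0$, whence $D(0,r_Q)\subset P(\mathbb{D})$ for some $r_Q>0$. Multiplicativity of the Aron--Berner extension yields $\widetilde{Q(\phi_{r_1},\ldots,\phi_{r_N})} = P(\tilde g)$; so for any $z\in M$ and $w\in\mathbb{D}\subset Cl(g,z)$, picking a net $(x_\alpha^{**})\subset B^{**}$ with $x_\alpha^{**}\to z$ in $\sigma(E^{**},E^*)$ and $\tilde g(x_\alpha^{**})\to w$ gives $P(\tilde g(x_\alpha^{**}))\to P(w)$, so $P(w)\in Cl(Q(\phi_{r_1},\ldots,\phi_{r_N}),z)$. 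Hence $D(0,r_Q)\subset Cl(Q(\phi_{r_1},\ldots,\phi_{r_N}),z)$ for every $z\in M$, completing the algebrability. The main technical hurdle is the construction of the interpolating sequence: balancing weak-star proximity to each $z_i$ against geometric decay of $1-\|x_k^{**}\|$, particularly in part~(ii) when some $z_i$ lies in the interior of $\overline{B}^{**}$.
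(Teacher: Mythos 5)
Your proposal is correct, and it reaches the conclusion by a genuinely different route from the paper. The analytic infrastructure is the same: both arguments rearrange Josefson--Nissenzweig-type norm-one sequences weak-star converging to the points of $M$, rescale by $1-2^{-k}$ to get an interpolating sequence via Theorem~\ref{thmIterpolating}, and finish with the open mapping theorem. The difference is in how the $\mathfrak{c}$ algebraically independent generators are produced. The paper interpolates $\mathfrak{c}$ many functions $f_\xi$ directly, using a combinatorial scheme (partitions of $[k-1,k]$ into intervals of length $<\xi^k$, values $0$, $1$, $a_N$ assigned mod $3$) that lets any finite subfamily realize every pattern in $\{0,1,a_N\}^m$ simultaneously; independence and the disk then come from a careful choice of a minimal-support monomial of $Q$ and a bespoke order on multi-indices. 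You instead interpolate a single master function $g$ with $\overline{\mathbb D}\subset Cl(g,z)$ for all $z\in M$ and take $\phi_r=e^{\alpha_r g}-1$ with $\mathbb Q$-linearly independent exponents --- the standard exponential method for strong algebrability --- so that independence reduces to linear independence of distinct exponentials and the disk comes from applying the open mapping theorem to the nonconstant entire function $P(w)=Q(e^{\alpha_{r_1}w}-1,\dots,e^{\alpha_{r_N}w}-1)$, which vanishes at $0$. Your route buys a substantially shorter and more transparent independence argument, avoiding the paper's interval combinatorics entirely; the paper's construction buys generators that are themselves norm-one interpolating functions and does not rely on composing with entire functions. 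Two small points you should tighten: the identity $\widetilde{P\circ g}=P\circ\tilde g$ is not bare multiplicativity but follows from it together with linearity and the fact that the Aron--Berner extension is an isometry (approximate $P$ uniformly on $\overline{D(0,\|g\|)}$ by its Taylor polynomials); and Goldstine's theorem is not what produces the sphere-valued weak-star approximating sequences --- that is the Josefson--Nissenzweig variant of \cite[Ch.~XII, Ex.~2]{Dis}, exactly as in the paper (Goldstine only enters later, in the deduction of Theorem~\ref{main-infdim} from the lemmas). Neither issue is a gap.
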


\begin{proof}
Let's only prove $\ref{lem3-2}$ since the proof of $\ref{lem3-1}$ is analogous. Set $M = \{x_k^{**} : k \in \mathbb{N}\}$.  By the variant of Josefson-Nissenzweig theorem presented in \cite[Ch.~XII,~Ex.~2]{Dis},  for each $p \in \mathbb{N}$ there exists a sequence $(v_{p,k}^{**})_k \subset S_{E^{**}}$ converging weak-star to $x_p^{**}$.

Let $\{\Theta_p : p \in \mathbb N\}$ be a partition of the natural numbers into (disjoint) infinite subsets. We can rearrange the family $(v_{p,k}^{**})_{p,k}$ in a single sequence $(w_k^{**})_k$ so that $x_p^{**}$ is the weak-star limit point of $\{w_k^{**}: k \in \Theta_p\}$ for each $p\in \mathbb N$.
Now we set $$z_k^{**} := \Big(1 - \frac 1 {2^k}\Big) w_k^{**}$$ for each $k \in \mathbb{N}$. It follows from Theorem \ref{thmIterpolating} that the sequence $(z_k^{**}) \subset B^{**}$ is an interpolating sequence for $\mathcal H^{\infty}(B)$. Also,  $x_p^{**}$ is the weak-star limit point of $\{z_k^{**} : k \in \Theta_p\}$ for each $p \in \mathbb N$.
We fix a bijection mapping
\begin{equation*}
n \in \mathbb N \mapsto s(n) = (s_1(n),s_2(n)) \in \mathbb Q_+^* \times \mathbb N.
\end{equation*}
We write $\Theta_p := \{k_1^p < k_2^p < \cdots\}$ and define
$$x_{r,N}^p= z^{**}_{k^p_{s^{-1}(r,N)}} \quad \text{ for } r\in   \mathbb Q_+^* \text{ and } N\in\mathbb N.
$$
Moreover, for each $k \in \mathbb N$ and $\xi \in (0,1)$ we take $k-1 = t_{\xi,0}^k < t_{\xi,1}^k < \cdots < t_{\xi,N_\xi^k}^k = k$ a partition of the interval $[k-1,k]$ such that $|t_{\xi,i}^k - t_{\xi,i-1}^k| < \xi^k$ and $N_\xi^k \in \mathbb N$. We set $I_{\xi,i}^k := [t_{\xi,i-1}^k,t_{\xi,i}^k)$ for $\xi \in (0,1), k \in \mathbb N$ and $i = 1,2,\ldots, N_\xi^k$.

Let $(a_k)_k$ be a sequence in $\mathbb{D}$ whose set of accumulation points is $\overline{\mathbb{D}}$. Since $(z_k^{**})$ is interpolating for $\mathcal H^\infty(B)$, for each $\xi \in (0,1)$ there is $f_\xi \in \mathcal H^\infty(B)$ so that
\begin{eqnarray} \label{DefFuncAlg}
\tilde{f_\xi}(x^p_{r,N}) :=
\left\{
\begin{array}{lll}
a_{N} & \mbox{if $r \in I_{\xi,i}^k$ and $i \equiv 2$ (mod 3);}\\
1 & \mbox{if $r \in I_{\xi,i}^k$ and $i \equiv 1$ (mod 3);}\\
0 & \mbox{otherwise.}
\end{array}
\right.
\end{eqnarray}
It is clear that for $\xi_1 < \xi_2 < \cdots < \xi_m$ in $(0,1)$ there is $k_0 \in \mathbb N$ such that $N_{\xi_m}^{k_0} \geq 3$ and $(\xi_{j+1} / \xi_j)^{k_0} > 3$ for each $j = 1,2, \ldots, m-1$. So, for every $j = 1, \ldots, m-1$ and $i = 1, \ldots, N_{\xi_{j+1}}^{k_0}$, the interval $I_{\xi_{j+1},i}^{k_0}$ contains at least four consecutive points of the partition $\{t_{\xi_j,0}^{k_0}, \ldots, t_{\xi_j,N_{\xi_j}^{k_0}}^{k_0}\}$ of $[k_0-1,k_0]$. Set $C_0^N :=0$, $C_1^N := 1$ and $C_2^N := a_N$ for each $N \in \mathbb N$. It follows from (\ref{DefFuncAlg}) that for each $(i_1, \ldots, i_m) \in \{0,1,2\}^m$ there is $r \in [k_0-1,k_0) \cap \mathbb Q$ such that
\begin{equation}\label{eq-lasf}
(f_{\xi_1}(x^p_{r,N}), \ldots, f_{\xi_m}(x^p_{r,N})) = (C_{i_1}^N, \ldots, C_{i_m}^N)
\end{equation}
for every $p,N \in \mathbb N$.

To complete the proof we need to show that the set $\{f_\xi : \xi \in (0,1)\}$ is an algebraically independent set and that the sub-algebra $\mathcal B \subset \mathcal H^\infty(B)$ generated by it lies in $\mathcal F_M(B) \cup \{0\}$. But this follows from the following assertion:
\begin{enumerate} [label = ($\ast$)]
	\sloppy \item \label{Prob ast} For every $m\in \mathbb N$, each choice of numbers $\xi_1 < \cdots < \xi_m$ in $(0,1)$ and every polynomial $Q \in \mathbb C[X_1, \ldots, X_m] \setminus \{0\}$ without constant term, there is $\delta > 0$ such that $D_\delta(0) \subset Cl(Q(f_{\xi_1}, \ldots, f_{\xi_m}), x_p^{**})$ for every $p$.
\end{enumerate}
It is clear that Assertion \ref{Prob ast} implies $\mathcal{B} \subset \mathcal F_M(B) \cup \{0\}$. Moreover, it also implies that the set $\{f_\xi : \xi \in (0,1)\}$ is algebraically independent:  if $Q$ is any polynomial, we apply Assertion \ref{Prob ast} to $\tilde Q= Q-Q(0)$ to get that if  $Q$ is not constant, then $Q(f_{\xi_1}, \ldots, f_{\xi_m})$ is not a constant function (the closure of its image contains a disk).

Let us  prove Assertion \ref{Prob ast}. Take $m\in \mathbb N$, $\xi_1 < \cdots < \xi_m$ in $(0,1)$ and  $Q$ as in the statement. We write $Q$ as $$Q(X) = \sum_{\alpha \in \Lambda} c_\alpha X^\alpha $$with $c_\alpha\ne 0$ for $\alpha \in \Lambda$.
Take $\tilde{\alpha} \in \Lambda$ such that $\# supp(\tilde{\alpha}) \le \# supp(\alpha)$ for each $\alpha \in \Lambda$. We consider the set $\Lambda_1 := \{\alpha \in \Lambda : supp(\alpha) = supp(\tilde{\alpha})\}$  with the following complete order: $(\alpha_1, \ldots, \alpha_m) < (\beta_1, \ldots, \beta_m)$ if there is $i_0 \in \{1,\ldots,m\}$ such that $\alpha_{i_0} < \beta_{i_0}$ and $\alpha_i = \beta_i$ for each $i \in \{i_0 +1, \ldots, m\}$  (this order can be seen as a variant of the lexicographical order).

We then write $\Lambda_1 = \{\beta^1 < \beta^2 < \cdots < \beta^l\}$ with $\beta^i = (\beta_1^i, \ldots, \beta_m^i)$ for each $i$. Note that $l= \#\Lambda_1 \le \#\Lambda$. If $l>1$,
since $\beta^l$ is the maximum for the introduced order, there exists $m_0 \in \{1, \ldots, N\}$ such that $\beta_{m_0}^{l-1} < \beta_{m_0}^l$ and $\beta_m^{l-1} = \beta_m^l$ for $m = m_0+1, \ldots, N$. In the case that $\Lambda_1$ contains just an element, we take any $m_0 \in supp(\tilde{\alpha})$.

We set ${i_{m_0}}= 2 $ and define $i_{\ell}=1$ if $\ell\in supp(\tilde{\alpha}) \setminus \{m_0\}$ and $i_{\ell}=0$ if $\ell \not\in supp(\tilde{\alpha})$. As was proved above there exist $k_0 \in \mathbb N$ and $r \in [k_0-1,k_0) \cap \mathbb Q$ satisfying \eqref{eq-lasf} for the previous $i_\ell$'s. Hence, for $\alpha\in \Lambda_1$ we have
$$ \big(\tilde{f}_{\xi_1}(x^p_{r,N}),\ldots, \tilde{f}_{\xi_m}(x^p_{r,N})\big)^\alpha = \big( \underbrace{\tilde f_{\xi_{m_0}}(x^p_{r,N})}_{\,\, =a_N}\big)^{\alpha_{m_0}} \times \underbrace{\cdots\cdots\cdots}_{\text{powers of 1}} = (a_N)^{\alpha_{m_0}}. $$ Note that every $\alpha \in \Lambda_1$ satisfies $\alpha_{m_0}>0$ and, by the maximality of $\beta^l$,  we have $\alpha_{m_0}<\beta^l_{m_0}$ if $\alpha\ne\beta^l$.
On the other hand, for  $\alpha\not\in \Lambda_1$ we can take $\ell\in supp(\alpha) \setminus \Lambda_1$ (since $\Lambda_1$ has minimum cardinality among the supports of the $\alpha$'s in $\Lambda$). Then,
$$ \big(\tilde{f}_{\xi_1}(x^p_{r,N}),\ldots, \tilde{f}_{\xi_m}(x^p_{r,N})\big)^\alpha = \big(\underbrace{f_{\xi_\ell}(x^p_{r,N})}_{=0} \big)^{\alpha_\ell}\times \underbrace{\cdots\cdots\cdots}_{\text{other powers}}   = 0.$$
This means that $$ Q\big(\tilde{f}_{\xi_1}(x^p_{r,N}),\ldots, \tilde{f}_{\xi_m}(x^p_{r,N})\big)$$ turns out to be a linear combination of powers of $a_N$ (i.e., a polynomial in $a_N$). There can be some cancellation, but the maximal exponent $\beta^l_{m_0}$ only appears once, so it remains. We then have a (one variable) polynomial $q:\mathbb C\to\mathbb C$ of degree $d=\beta^l_{m_0}>0$ such that
$$ Q\big(\tilde{f}_{\xi_1}(x^p_{r,N}),\ldots, \tilde{f}_{\xi_m}(x^p_{r,N})\big) = q(a_N).$$
Note that the  polynomial $q$ is independent of $p$ and that $q(0)=0$. The open mapping theorem (applied to $q$) gives a disc $D_\delta(0)$ in $q(\mathbb D)$.
Since  $x^p_{r,N}$ converges to $ x_p^{**}$ in the weak-star topology and $(a_N)_N$ is dense in $\overline {\mathbb D}$, it follows from (\ref{EqCl}) that this disc must be contained in $Cl(Q(f_{\xi_1}, \ldots, f_{\xi_m}), x_p^{**})$. This proves  Assertion~\ref{Prob ast} and completes the proof of the lemma.
\end{proof}

\begin{lem} \label{lem4}
\begin{enumerate} [label=(\roman*)]
\item\label{lem4-1} For $E=(\mathbb C^n,\|\,\|)$ and $M \subset S$ we have	$\mathcal{F}_M(B) = \mathcal{F}_{\overline{M}}(B)$.
\item\label{lem4-2} For $E$  an infinite-dimensional Banach space and $M \subset \overline{B}^{**}$ we have $\mathcal{F}_M(B) = \mathcal{F}_{\overline{M}^{w^*}}(B)$.
\end{enumerate}
\end{lem}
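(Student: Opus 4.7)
The plan is to use the characterization of the cluster set given in the introduction:
\[
Cl(f,z)=\bigcap_{U}\overline{f(U\cap B)},
\]
where $U$ ranges over the weak-star open neighborhoods of $z$ (in part (i), the norm-open neighborhoods, which for finite-dimensional $E$ coincide with the weak-star ones). Since the two parts differ only in the ambient topology, I will describe the argument for part (ii); part (i) is identical after replacing the weak-star topology by the norm topology.

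The inclusion $\mathcal{F}_{\overline{M}^{w^*}}(B)\subset \mathcal{F}_M(B)$ is immediate from $M\subset \overline{M}^{w^*}$: enlarging the index set only shrinks the intersection $\bigcap_{y}Cl(f,y)$, so any disk lying in every cluster set indexed by $\overline{M}^{w^*}$ automatically lies in every cluster set indexed by $M$. For the reverse inclusion, I would take $f\in \mathcal{F}_M(B)$ together with a disk $D$ centered at the origin contained in $Cl(f,y)$ for every $y\in M$, fix $z\in \overline{M}^{w^*}$, and prove $D\subset Cl(f,z)$. Let $U$ be an arbitrary weak-star open neighborhood of $z$. By weak-star density of $M$ in $\overline{M}^{w^*}$ there exists $y\in M\cap U$, and since $U$ is open and contains $y$ it is in particular a weak-star neighborhood of $y$; hence $\overline{f(U\cap B)}\supset Cl(f,y)\supset D$. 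Intersecting over all such $U$ and invoking the characterization of $Cl(f,z)$ above yields $D\subset Cl(f,z)$, as required.

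The only subtle point is the elementary topological observation that an open set containing $z$ is automatically a neighborhood of every one of its points; once this is isolated, no real obstacle remains. In effect the lemma is a manifestation of the upper semicontinuity, with respect to the inclusion order, of the set-valued map $z\mapsto Cl(f,z)$, and no appeal to the interpolation machinery from Lemma~\ref{lem3} is needed here.
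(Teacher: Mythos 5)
Your proof is correct and is essentially the same as the paper's: both rest on the characterization $Cl(f,z)=\bigcap_U\overline{f(U\cap B)}$ and on the observation that a weak-star open set containing $z\in\overline{M}^{w^*}$ meets $M$ and is therefore a neighborhood of some $y_0\in M$. The paper merely phrases the nontrivial inclusion contrapositively (a value $\mu$ missing from some $Cl(f,x_0)$ with $x_0\in\overline{M}^{w^*}$ must be missing from some $Cl(f,y_0)$ with $y_0\in M$), which is equivalent to your direct argument.
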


\begin{proof}
We only prove $\ref{lem4-2}$  since the proof of $\ref{lem4-1}$	folows in the same way. Let us see that $\bigcap_{x \in M} Cl(f,x) = \bigcap_{x \in \overline{M}^{w^*}} Cl(f,x)$. One inclusion is clear. For the other one, take $\mu \in \mathbb{C}$  such that $\mu \not\in \bigcap_{x \in \overline{M}^{w^*}} Cl(f,x)$. Then, there exists $x_0 \in \overline{M}^{w^*}$ such that $\mu \not\in Cl(f,x_0)$. That is, there is a weak-star open subset of $E^{**}$ such that $x_0 \in U$ and $\mu \not\in \overline{f(U \cap B)}$. Since $x_0 $ lies in $ \overline{M}^{w^*}$, we can find $y_0 \in M \cap U$. Therefore $U$ is a weak-star neighborhood of $y_0$ and  $\mu$ does not belong to $\overline{f(U \cap B)}$, which implies that $\mu \not\in Cl(f,y_0) \supset \bigcap_{y \in M} Cl(f,y)$. This completes the proof.
\end{proof}

\medskip

The previous two lemmas easily give the $\mathfrak{c}$-algebrability of Theorems \ref{main-findim} and \ref{main-infdim}.

\begin{proof}[Proof of Theorems \ref{main-findim} and \ref{main-infdim}]
 The finite dimensional case is immediate. For the infinite dimensional case, we take a countable set $M$  satisfying $\overline{M} = \overline{B}$, use Goldstine theorem to conclude that ${\overline{M}}^{w^*} = \overline{B}^{**}$ and then use the lemmas.
\end{proof}

\section{Proofs of Theorems  \ref{main-ell_p} and \ref{main-c_0}} \label{sec-elep}

In this section we give the proofs of Theorems \ref{main-ell_p} and \ref{main-c_0}. First, we show that a general result like Theorem~\ref{main-infdim} does not hold for $A_u(B_{E})$, for example,  whenever $E$ is uniformly convex or $E=\ell_1$.

We denote by
$A(B)$ the algebra of uniform limits on $B$ of polynomials in the
functions in $E^*$. The following is a consequence of \cite[Lemma 2.4]{Aron4}: take $x \in \overline B$ and suppose there exists $g\in A(B)$
such that $g(x)=1$, while $|g|$ is bounded by a constant strictly
less than $1$ on any subset of $B$ at a positive distance from $x$.
Then, if  $f\in \mathcal{H}^\infty(B)$ satisfies that
 $f(y) \to \lambda$ whenever $y\in B$ tends to $x$ in
norm, then  $Cl(f,x)=\{\lambda\}$.

\begin{pps}\label{notinAu}Let $E$ be either a uniformly convex Banach space or $\ell_1$
and let $f\in \mathcal{H}^\infty(B)$. If there exists a  fixed disk contained in $Cl(f,z)$ for every $z\in {B}$, then $f$ does not belong to $A_u(B)$.
\end{pps}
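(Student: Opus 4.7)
The plan is to argue by contradiction, first propagating the disk condition from the interior of $B$ to the whole sphere and then applying the peak-function criterion quoted just before the proposition to rule out large cluster sets at boundary points.

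Suppose $f \in A_u(B)$ and fix a disk $D$ of positive radius centered at $0$ with $D \subset Cl(f,z)$ for every $z \in B$. For each $x_0 \in S$, let $V$ be any weak-star open neighborhood of $x_0$ in $E^{**}$. The points $z_n := (1 - 1/n)\,x_0$ lie in $B$ and converge to $x_0$ in norm, hence in the weak-star topology, so $z_n \in V$ eventually and, in particular, $V$ is a weak-star neighborhood of $z_n$. The intersection description of the cluster set then gives
\[
 D \subset Cl(f, z_n) \subset \overline{f(V \cap B)},
\]
and intersecting over all such $V$ yields $D \subset Cl(f, x_0)$.

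At this point the hypothesis on $E$ enters. For $E$ uniformly convex, a peak function at $x_0$ is built from a norming functional $\varphi \in S_{E^*}$ at $x_0$: the affine polynomial $g(y) = (1 + \varphi(y))/2$ lies in $A(B)$, satisfies $g(x_0) = 1$, and $|g|$ is uniformly bounded by a constant strictly less than $1$ on $\{y \in B : \|y - x_0\| \geq \varepsilon\}$ for every $\varepsilon > 0$, by a standard argument splitting according to whether $\|y\|$ is close to $1$ (where uniform convexity kicks in) or bounded away from $1$ (where the conclusion is immediate from $|g(y)| \leq (1+\|y\|)/2$). For $E = \ell_1$, the analogous conclusion --- namely that functions in $A_u(B_{\ell_1})$ have trivial cluster sets at every $x_0 \in S_{\ell_1}$ --- is already a consequence of \cite[Corollary 2.5]{Aron4}, as noted in the discussion preceding the proposition. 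In either case, $f \in A_u(B)$ extends continuously to $\overline B$ in norm, so $f(y) \to f(x_0)$ as $y \to x_0$ in norm, and the cited Aron criterion forces $Cl(f, x_0) = \{f(x_0)\}$, contradicting $D \subset Cl(f, x_0)$.

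The main subtlety I expect is the propagation step, since cluster sets are not jointly continuous in $z$. The intersection formula $Cl(f, z) = \bigcap_U \overline{f(U \cap B)}$ together with the observation that every weak-star neighborhood of $x_0 \in S$ is eventually a weak-star neighborhood of the interior approximants $(1 - 1/n)\,x_0$ makes this passage clean; after that, everything reduces to a direct appeal to the peak-function machinery already available in the cited literature.
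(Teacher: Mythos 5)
Your proof is correct and takes essentially the same route as the paper: the propagation of the disk from $B$ to the sphere is precisely the argument of Lemma~\ref{lem4} (which the paper simply invokes), and the boundary step is the same appeal to the peak-function criterion of \cite[Lemma 2.4]{Aron4}. The only cosmetic differences are that you construct the peak function for the uniformly convex case explicitly (the paper cites \cite{Far98} for this) and quote \cite[Corollary 2.5]{Aron4} for $\ell_1$ where the paper cites \cite[Theorem 2.6]{AcoLou07}.
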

\begin{proof}
Proposition 4.1 from \cite{Far98}  shows that if $E$ is a
uniformly convex Banach space, then for any $x\in S$ there exists
a function $g$ satisfying the hypotheses of \cite[Lemma 2.4]{Aron4} mentioned above.
In \cite[Theorem 2.6]{AcoLou07} it is proved that the same holds for any point of the unit sphere of $\ell_1$. Hence, in either case we have that  $Cl(h,z)$ is a singleton for every $h\in A_u(B)$ and every $z\in S$.

Now, if there exists a fixed disk contained in $Cl(f,z)$ for every $z\in {B}$, by Lemma~\ref{lem4} the same holds for every $z\in \overline{B^{**}}$ and, in particular, for $z\in S$. By the previous considerations, this means that $f$ cannot belong to $A_u(B)$.
\end{proof}

We recall that a continuous $m$-homogeneous polynomial from $E$ into $\mathbb C$ is the restriction to the diagonal of a continuous $m$-linear functional. The symbol $\mathcal{P}(^mE)$ denotes the vector space of all continuous $m$-homogeneous polynomials from $E$ into $\mathbb{C}$. For every $m \in \mathbb N$ and $M \subset \overline{B}^{**}$ we set
$$\mathcal{E}_M(B) := \{f \in \mathcal H^\infty(B) : Cl(f,x) \mbox{ contains a disk for each } x \in M\};$$
$$\mathcal{E}_M^m(B) := \mathcal{E}_M(B) \cap \mathcal{P}(^mE).$$

\begin{rem}
  A function $f$ in $A_u(B)$ can be continuously extended to the closure $\overline B$ (extension which we still denote by $f$). It is easy to see that, in this case, the cluster set of $f$ at $z\in \overline B^{**}$ satisfies
$$
Cl(f,z) = \{\mu \in \mathbb{C} :  \mbox{ there exists a net } (x_\alpha) \subset \overline B,~ x_\alpha\overset{w(E^{**},E^*)}{\longrightarrow} z \mbox{ and } {f}(x_\alpha) \rightarrow \mu \}.
$$
As a consequence, to show that a cluster set contains a disk we can also consider vectors lying in the unit sphere.
\end{rem}

Following \cite[Definition~2.39]{dineen}, we say that a sequence $(x_n)\subset E$ has a lower $q$-estimate, $1\le q<\infty$, if there exists a constant $C>0$ such that for any sequence of scalars $(a_n)$ we have \begin{equation}\label{eq-lowerp} \sum_{n=1}^{k}|a_n|^q \le C \big\| \sum_{n=1}^{k} a_n\,x_n \big\|^q\end{equation} for any $k\in \mathbb N$.

Clearly, the canonical basis of $\ell_p$ has a lower $p$-estimate with constant $C=1$. Let us recall now the definition of Lorentz
spaces; further details and properties can be found in
\cite[Section 4.e]{LibroLiTz1} and \cite[Section
2.a]{LibroLiTz2}.\ Let $(w_k)_{k=1}^{\infty}$ be a decreasing
sequence of positive numbers such that $w_1=1$, $\lim_{k} w_k=0$
and $\sum_{k=1}^{\infty} w_k=\infty$ and let $1\leq p<\infty$.\
The corresponding Lorentz sequence space, denoted by $d(w,p)$
is defined as the space of all sequences $(x_k)_{k}$ such that
\[
\| x \| = \sup_{\pi \in \Sigma_{\mathbb{N}}} \bigg(
\sum_{k=1}^{\infty} |x_{\pi(k)}|^{p} w_k \bigg)^{1/p}<\infty,\]
where $\Sigma_{\mathbb{N}}$ denotes the group of permutations of the natural numbers. It is clear from the definition that we have a norm one inclusion $$\ell_p\overset{1}{\hookrightarrow } d(w,p).$$ For $1<p<\infty$ the space $d(w,p)$ is reflexive and its dual is also a Banach sequence space. Therefore, by duality we have
$$d(w,p)^* \overset{1}{\hookrightarrow } \ell_{p'},$$ which means that the canonical basis of $d(w,p)^*$ has lower $p'$-estimate with constant $C=1$.

Suppose now that $w$ belongs to $\ell_r$ for some $1<r<\infty$. In this case, we have the norm one inclusion
$$\ell_r\overset{1}{\hookrightarrow } d(w,1).$$  Let $d_*(w,1)$ denote the predual of $d(w,1)$. By duality, the previous inclusion gives
$$d_*(w,p) \overset{1}{\hookrightarrow } \ell_{r'},$$ and then the canonical basis of $d_*(w,p)$ has a lower $r'$-estimate with constant $C=1$.

Since the canonical basis of $\ell_p$ ($1<p<\infty$), $d(w,p)^*$ ($1<p<\infty$) and $d_*(w,1)$ are weakly null, the following proposition shows the spaceability part of Theorem~\ref{main-ell_p} except for the case $E=\ell_1$.

\begin{pps}
  Suppose $E$ has a normalized weakly-null Schauder basis $(x_n)$  with lower $q$-estimate. Then, for any $m \ge q$ the set $\mathcal{E}_B^m(B) \cup \{0\}$ contains an isomorphic copy of  $\ell_\infty$ (which is isometric if the constant $C$ in \eqref{eq-lowerp} is one).
\end{pps}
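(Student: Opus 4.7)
The plan is to write down an explicit bounded linear injection $\Phi\colon \ell_\infty \hookrightarrow \mathcal P(^m E)$ whose image (minus zero) lies in $\mathcal E_B^m(B)$. Fix a partition $\mathbb N = \bigsqcup_{k\in\mathbb N} I_k$ into disjoint infinite subsets and write $k(n)$ for the unique $k$ with $n\in I_k$. Letting $(x_n^*)\subset E^*$ be the biorthogonal functionals of $(x_n)$, I set
\[
\Phi(\xi)(y) := \sum_{n=1}^\infty \xi_{k(n)}\, x_n^*(y)^m, \qquad \xi=(\xi_k)\in\ell_\infty.
\]
The role of the partition is to ensure that, however sparse the nonzero coordinates of $\xi$ may be, each nonzero value $\xi_{k_0}$ still multiplies infinitely many monomials $(x_n^*)^m$; this is crucial at the cluster-set step below.

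First I would check that $\Phi(\xi)$ belongs to $\mathcal P(^m E)$ with $\|\Phi(\xi)\|\le C^{m/q}\|\xi\|_\infty$. The lower $q$-estimate provides both $|x_n^*(y)|\le C^{1/q}\|y\|$ and $\sum_n|x_n^*(y)|^q\le C\|y\|^q$; combining these via $|x_n^*(y)|^m = |x_n^*(y)|^{m-q}\cdot|x_n^*(y)|^q$ yields $\sum_n|x_n^*(y)|^m\le C^{m/q}\|y\|^m$. Applying H\"older's inequality with $m$ factors of exponent $m$ then bounds the associated symmetric $m$-linear form $L_\xi(y_1,\ldots,y_m) := \sum_n \xi_{k(n)} x_n^*(y_1)\cdots x_n^*(y_m)$ by $C^{m/q}\|\xi\|_\infty\prod_i\|y_i\|$, which shows simultaneously that the series defining $\Phi(\xi)$ converges and that $\Phi(\xi)$ is the diagonal of a continuous symmetric $m$-linear form; in particular $\|\Phi(\xi)\|\le C^{m/q}\|\xi\|_\infty$ (which is $\|\xi\|_\infty$ when $C=1$). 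The reverse inequality and injectivity of $\Phi$ follow at once from $\Phi(\xi)(x_n) = \xi_{k(n)}$: every coordinate $\xi_k$ is attained as $n$ runs over $I_k$, so $\|\Phi(\xi)\|\ge\|\xi\|_\infty$.

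The core of the argument is to show that, for every nonzero $\xi$ and every $x\in B$, the cluster set $Cl(\Phi(\xi),x)$ contains a disk. Pick $k_0$ with $\xi_{k_0}\ne 0$, expand $x=\sum_j a_j x_j$ in the basis, and fix $r\in(0,1-\|x\|)$. For each $n\in I_{k_0}$ and $\lambda\in\overline{D_r(0)}$, the vector $y_n^\lambda := x+\lambda x_n$ lies in $B$, and a direct biorthogonality calculation gives
\[
\Phi(\xi)(y_n^\lambda) - \Phi(\xi)(x) \;=\; \xi_{k_0}\bigl[(a_n+\lambda)^m - a_n^m\bigr] \;=:\; q_n(\lambda).
\]
Summability $\sum_j |a_j|^q<\infty$ forces $a_n\to 0$, so the polynomials $q_n$ converge uniformly on $\overline{D_r(0)}$ to $\xi_{k_0}\lambda^m$, whose image on $D_r(0)$ is exactly $D_{|\xi_{k_0}|r^m}(0)$. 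A Rouch\'e argument then yields $D_\delta(0)\subset q_n(D_r(0))$ for every $\delta<|\xi_{k_0}|r^m$ and all sufficiently large $n\in I_{k_0}$. For each such $w\in D_\delta(0)$ this produces $\lambda_n\in D_r(0)$ with $\Phi(\xi)(y_n^{\lambda_n}) = \Phi(\xi)(x)+w$; weak nullity of $(x_n)$ makes $y_n^{\lambda_n}\to x$ weakly (hence weak-star in $E^{**}$), placing $\Phi(\xi)(x)+w$ in $Cl(\Phi(\xi),x)$. Therefore $Cl(\Phi(\xi),x)\supset D_\delta(\Phi(\xi)(x))$.

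The main obstacle is the uniformity-in-$n$ of this last step: each $q_n$ is an a priori different polynomial whose images need not share a common disk. This is exactly what the lower $q$-estimate buys us: it forces $a_n\to 0$, collapsing $q_n$ uniformly onto $\xi_{k_0}\lambda^m$, where the Rouch\'e argument supplies the common disk that lands inside $Cl(\Phi(\xi),x)$ for all $x\in B$ simultaneously.
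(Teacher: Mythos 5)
Your proposal is correct: the norm estimates, the identity $\Phi(\xi)(x+\lambda x_n)-\Phi(\xi)(x)=\xi_{k(n)}\bigl[(a_n+\lambda)^m-a_n^m\bigr]$, the uniform collapse of $q_n$ onto $\xi_{k_0}\lambda^m$ (since $a_n\to 0$), the Rouch\'e step, and the passage to the cluster set via weak nullity of $(x_n)$ all check out, and the lower $q$-estimate is used exactly where it is needed. The construction differs from the paper's in one genuine respect. The paper fixes a sequence $(a_j)$ in $\mathbb{D}$ whose accumulation points fill $\overline{\mathbb{D}}$ and builds it into the polynomial, $f_{\Theta}(\sum_n b_n x_n)=\sum_j a_j (b_{n_j})^m$; it then perturbs $y$ only along the single real direction $(1-\|y\|)x_{n_j^k}$ and lets $j\to\infty$, so the disk in the cluster set comes directly from the density of $(a_j)$ in $\overline{\mathbb{D}}$ together with compactness of cluster sets, with no complex-analytic input. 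You instead take constant coefficients on each block and recover the disk by letting the perturbation parameter $\lambda$ range over a complex disk, invoking the open mapping theorem in the form of a Rouch\'e/Hurwitz argument made uniform in $n$. Your route costs an extra uniformity argument (which you correctly identify and handle) but dispenses with the auxiliary dense sequence and yields the target value $\Phi(\xi)(x)+w$ exactly along the approximating sequence rather than only in the limit; the paper's route is shorter and gives the closed disk of radius $\|\beta\|_\infty(1-\|y\|)^m$ immediately. Both produce disks of essentially the same radius, and both give the isometry when $C=1$, so either proof serves the proposition equally well.
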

\begin{proof} Let $(a_j)_j$ be a sequence in $\mathbb{D}$ whose set of  accumulation points is $\overline{\mathbb{D}}$ and take $m\ge q$. Given an infinite subset $\Theta\subset \mathbb N$, we write $\Theta = \{n_1 < n_2 < \cdots\}$. We define $f_\Theta:E\to \mathbb C$ as
\begin{equation}\label{eq_ftheta}
  f_\Theta\bigg(\sum_n b_n x_n\bigg) =  \sum_{j=1}^\infty a_j (b_{n_j})^m.
\end{equation} To see that $f_\Theta$ is well defined, we just note that $$\sum_{j=1}^\infty |a_j| |b_{n_j}|^m \leq \sum_{n=1}^\infty |b_n|^m \le \big(\sum_{n=1}^\infty |b_n|^q\big)^{m/q} \le C \big\|\sum_{n=1}^\infty b_n x_n\big\|^m.$$
This also shows that  $f_\Theta$ is a continuous $m$-homogeneous polynomial.

Let
  $\{\Theta_k : k \in \mathbb{N}\}$ be a family of pairwise disjoint infinite subsets of $\mathbb{N}$. We define a mapping $\beta\in \ell_\infty \mapsto F_\beta \in \mathcal{P}(^mE)$ by $$F_\beta(x)=\sum_{k=1}^\infty \beta_k f_{\Theta_k}(x)$$ and $f_{\Theta_k}$ is as in \eqref{eq_ftheta}. To see that $F_\beta$ is a well defined and continuous $m$-homogeneous  polynomial, we write $\Theta_k = \{n^k_1 < n^k_2 < \cdots\}$. We have
  $$\sum_{k=1}^\infty |\beta_k|  \sum_{j=1}^\infty  |a_j| \, |b_{n^k_j}|^m \le \|\beta\|_\infty \sum_{k=1}^\infty  \sum_{j=1}^\infty   \, |b_{n^k_j}|^m \le \|\beta\|_\infty \bigg(\sum_{n=1}^{\infty}|b_n|^p\bigg)^{m/p} \le C \|\beta\|_\infty \bigg\| \sum_{n=1}^{\infty} b_n\,x_n \bigg\|^m.$$ This also shows that $\|F_\beta\|\le C\|\beta\|_\infty$ and the double series is absolutely convergent.

  Now, for $y = \sum_{n=1}^\infty c_n x_n \in B$, we note  that
  \begin{eqnarray*}
  F_\beta(y + (1-\|y\|) x_{n_j^k}) & = & \beta_k (c_{n_j^k} + 1 - \|y\|)^m a_j + \sum_{\substack{r,\ell = 1 \\ (r,\ell) \not= (k,j)}}^\infty \beta_r (c_{n_\ell^r})^m a_\ell  \\ &=&
  \beta_k (c_{n_j^k} + 1 - \|y\|)^m a_j + F_{\beta}(y) -  \beta_k (c_{n_j^k})^m a_j.
  \end{eqnarray*}
 Recall that $(a_j)_j$ accumulates at every element of the closed unit disk and note that $c_{n_j^k}\to 0$ as $j\to \infty$. Therefore, taking $y=0$ we have $\|F_\beta\|\ge |\beta_k|$ for all $k$, which means that $\|F_\beta\|\ge \|\beta\|_\infty$. On the other hand, if $\beta  \ne 0$, this shows that the cluster set of $F_\beta$ at $y$ contains a disk centered
  at $F_\beta(y)$ of radius $\|\beta\|_\infty(1-\|y\|)^m$.
\end{proof}

Note that for $E=\ell_p$ with the canonical basis and $\Theta=\mathbb N$, the function $f_\Theta$ in \eqref{eq_ftheta} is the function given in the Example after \cite[Lemma 2.1]{Aron4}.
As we mentioned above, the canonical bases of $\ell_p$ and $d_*(w,p)$ have lower $p$-estimates with constant $C=1$  and are weakly null for $1<p<+\infty$. Since $\mathcal{P}(^mE)$ is contained in $A_u(B)$, we conclude that for these spaces the set of functions $f\in A_u(B)$, whose cluster set at every $y \in B$  contains a disk is $\mathfrak{c}$-spaceable.

The canonical basis of  $\ell_1$ is not weakly null, so the previous proposition does not apply. For the following proposition, that settles the case $E=\ell_1$, we adapt an example presented in the proof of  \cite[Proposition 3.2]{Aron1}.

\begin{pps} \label{pps_eleuno}
  For any even $m$, the set $\mathcal{E}_{B_{\ell_1}}^m(B_{\ell_1})\cup \{0\}$ contains an isometric copy of~$\ell_\infty$.
\end{pps}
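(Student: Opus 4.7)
The plan is to adapt the construction from \cite[Proposition 3.2]{Aron1} by replacing single-basis perturbations with antisymmetric pairs $e_{n_1}-e_{n_2}$; this replacement is necessary because the canonical basis of $\ell_1$ is not weakly null. First I would fix pairwise disjoint infinite subsets $\{\Theta_k\}_{k\in\mathbb N}$ of $\mathbb N$, and inside each $\Theta_k$ a further partition into infinite subsets $\{T^k_j\}_{j\in\mathbb N}$. Given a sequence $(a_j)\subset\mathbb D$ accumulating at every point of $\overline{\mathbb D}$ and given $\beta\in\ell_\infty$, I would set $d_n^\beta:=\beta_k a_j$ if $n\in T^k_j$ and $d_n^\beta:=0$ otherwise, and define
$$F_\beta(x):=\sum_{n=1}^\infty d_n^\beta\,x_n^m.$$
The elementary inequality $\sum_n|x_n|^m\le\|x\|_1^m$ together with the bound $|d_n^\beta|\le\|\beta\|_\infty$ shows that $F_\beta\in\mathcal P(^m\ell_1)$ with $\|F_\beta\|\le\|\beta\|_\infty$, the map $\beta\mapsto F_\beta$ is linear, and evaluating $F_\beta(e_n)=\beta_k a_j$ for $n\in T^k_j$ with $|\beta_k|\to\|\beta\|_\infty$ and $|a_j|\to1$ yields the matching lower bound, so the map is an isometric linear embedding of $\ell_\infty$ into $\mathcal P(^m\ell_1)$.

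The crux is showing that for every $\beta\ne 0$ and every $y\in B_{\ell_1}$ the cluster set $Cl(F_\beta,y)$ contains a disk. Fix $k$ with $\beta_k\ne 0$, any $j\in\mathbb N$, and set $d:=(1-\|y\|)/2>0$. I would construct a net in $\overline{B_{\ell_1}}$ indexed by triples $(F,\varepsilon,N)$ with $F\subset\ell_\infty$ finite, $\varepsilon>0$ and $N\in\mathbb N$, by setting
$$x_{F,\varepsilon,N}:=y+d(e_{n_1}-e_{n_2}),$$
where $n_1,n_2\in T^k_j\cap(N,\infty)$ are picked with $|\phi(n_1)-\phi(n_2)|<\varepsilon$ for every $\phi\in F$. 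A pigeonhole argument inside the infinite set $T^k_j\cap(N,\infty)$---partitioning a suitable bounded region of $\mathbb C^{|F|}$ into finitely many pieces of small diameter and applying it to $n\mapsto(\phi(n))_{\phi\in F}$---guarantees the existence of such $n_1,n_2$. By construction $\|x_{F,\varepsilon,N}\|_1\le 1$ and $x_{F,\varepsilon,N}\to y$ in $w(\ell_1^{**},\ell_\infty)$, so by the remark on $A_u(B)$ functions the cluster set $Cl(F_\beta,y)$ contains every limit of $F_\beta(x_{F,\varepsilon,N})$.

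Expanding the polynomial, only the two coordinates $n_1,n_2$ contribute beyond $F_\beta(y)$:
$$F_\beta(x_{F,\varepsilon,N})-F_\beta(y)=\beta_k a_j\bigl[(y_{n_1}+d)^m-y_{n_1}^m\bigr]+\beta_k a_j\bigl[(y_{n_2}-d)^m-y_{n_2}^m\bigr].$$
Since $y\in\ell_1$, the coordinates $y_{n_1},y_{n_2}$ tend to $0$ as $N\to\infty$, and the evenness of $m$ gives $d^m+(-d)^m=2d^m$. Consequently the right-hand side tends to $2d^m\beta_k a_j=(1-\|y\|)^m\beta_k a_j/2^{m-1}$, placing $F_\beta(y)+(1-\|y\|)^m\beta_k a_j/2^{m-1}$ in $Cl(F_\beta,y)$ for every $j$. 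Letting $j$ vary and using density of $(a_j)$ together with closedness of the cluster set yields the disk of radius $|\beta_k|(1-\|y\|)^m/2^{m-1}>0$ centered at $F_\beta(y)$.

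The main obstacle is the weak-star convergence of this auxiliary net: by Schur's theorem no non-trivial weakly null \emph{sequence} exists in $\ell_1$, so one genuinely needs a true net, and the existence of the required indices $n_1,n_2$ is precisely what the pigeonhole argument provides. The evenness of $m$ enters only in the final identity $d^m+(-d)^m=2d^m$, which is what prevents the two antisymmetric contributions from cancelling.
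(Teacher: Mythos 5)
Your proposal is correct and follows essentially the same route as the paper: the same polynomial $F_\beta(x)=\sum_k\beta_k\sum_j a_j\sum_{n\in T^k_j}x_n^m$, the same isometry estimates, and the same perturbation $y+\tfrac{1-\|y\|_1}{2}(e_{n_1}-e_{n_2})$ exploiting the evenness of $m$. The only difference is that you spell out, via the pigeonhole argument, the fact that $0$ lies in the $w(\ell_1,\ell_\infty)$-closure of $\{(e_s-e_t)/2 : s,t\in T^k_j,\ s,t\ge N\}$, which the paper asserts without proof.
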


\begin{proof} We take again a sequence $(a_j)_j$ in $\mathbb{D}$ whose set of  accumulation points is $\overline{\mathbb{D}}$ and a family $\{\Theta_k : k \in \mathbb{N}\}$ of pairwise disjoint infinite subsets of $\mathbb{N}$. For each $k \in \mathbb N$, we let $\{I_r^{k}\}_{r \in \mathbb{N}}$ be a countable partition of $\Theta_k$ with each $I_r^k$ infinite. We define
\begin{equation}\label{eq_ftheta_eleuno}
  f_{\Theta_k} : (x_n) \in \ell_1 \longmapsto \sum_{r=1}^\infty {a_r} \sum_{n \in I_r^k} x_n^m \in \mathbb{C}.
\end{equation}
Let us observe that the linear mapping  $\beta\in \ell_\infty \mapsto F_\beta \in \mathcal{P}(^m\ell_1)$  given by $$F_\beta(x)= \sum_{k=1}^\infty \beta_k f_{\Theta_k}(x)$$ is an isometry. Indeed,
$$\sum_{k=1}^\infty |\beta_k| \sum_{r=1}^\infty |a_r| \sum_{n \in I_r^k} |x_n|^m \le \|\beta\|_\infty \bigg(\sum_{n=1}^\infty |x_n|\bigg)^m = \|\beta\|_\infty \|(x_n)\|_1^m,$$
and hence $\|F_\beta\| \le \|\beta\|_\infty$. On the other hand, if $s \in I_r^k$ then $F_\beta(e_s) = \beta_k a_r$. By the density of $(a_j)_j$ in $\overline{\mathbb{D}}$, this implies $\|F_\beta \| \ge |\beta_k|$ for each $k$, which means that $\|F_\beta\| \ge \|\beta\|_\infty$.

Now note that
\begin{equation}\label{eq_clausura}
0 \in \overline{\bigg\{\frac{e_s-e_t}2\colon\ s > t \ge N\, ,\, s,t\in I_r^k \bigg\}}^{w(\ell_1,\ell_\infty)}
\end{equation}
for every $N,r,k \in \mathbb N$. Note also that for $s,t \in I_r^k$, $s \not= t$, and $y = (y_n) \in B_{\ell_1}$, we have
\begin{eqnarray*}
	 F_\beta \left(y + (1-{\|y\|}_1) \left(\frac{e_s - e_t}{2}\right)\right) &=& \sum_{j=1}^\infty \beta_j \sum_{i=1}^\infty a_i \sum_{n \in I_i^j} \left[y_n + (1-{\|y\|}_1) \left(\frac{\delta_{s,n} - \delta_{t,n}}{2}\right)\right]^m \\  &&\hspace{-2.3in} = \sum_{j=1}^\infty \beta_j \sum_{i=1}^\infty a_i \sum_{\substack{n \in I_i^j \\ n \not= s, n \not= t}} y_n^m + \beta_k a_r \left[ \left(y_s + \frac{1 - {\|y\|}_1}{2}\right)^m + \left(y_t - \frac{1 - {\|y\|}_1}{2}\right)^m \right] \\ &&\hspace{-2.3in} = F_\beta(y) - \beta_k a_r (y_s^m + y_t^m) + \beta_k a_r \left[ \left(y_s + \frac{1 - {\|y\|}_1}{2}\right)^m + \left(y_t - \frac{1 - {\|y\|}_1}{2}\right)^m \right].
\end{eqnarray*}
From this and \eqref{eq_clausura} we deduce that $F_\beta(y) + 2^{1-m} \beta_k a_r (1 - {\|y\|}_1)^m$ belongs to $Cl(F_\beta,y)$ for every $r,k \in \mathbb N$. Since $y_s\to 0$, $(a_j)_j$ accumulates in $\overline{\mathbb{D}}$ and $Cl(F_\beta,y)$ is compact, we conclude that $Cl(F_\beta,y)$, $\beta \not= 0$, contains the disk centered at $F_\beta(y)$ and radius $2^{1-m} \|\beta\|_\infty(1- {\|y\|}_1)^m$.
\end{proof}

Again, since $\mathcal{P}(^m\ell_1)$ is contained in $A_u(B_{\ell_1})$, the set of functions $f\in A_u(B_{\ell_1})$ whose cluster set at every $y \in B_{\ell_1}$ contains a disk is $\mathfrak{c}$-spaceable.

\medskip
We now focus on  Theorem~\ref{main-c_0}. For this, we adapt an example presented in \cite{Aron4} (see the example there before Theorem 5.1).

\begin{pps}
  The set $\mathcal{E}_{B_{c_0}}(B_{c_0})\cup \{0\}$ contains an almost isometric copy of  $\ell_1$.
\end{pps}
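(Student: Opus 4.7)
The plan is to imitate Proposition~\ref{pps_eleuno}, replacing the $\ell_1$-polynomial building blocks $f_{\Theta_k}$ by the bounded holomorphic functions on $B_{c_0}$ constructed in the example preceding Theorem~5.1 of~\cite{Aron4}. Concretely, I would fix a sequence $(a_j)_j\subset \mathbb D$ whose set of accumulation points is $\overline{\mathbb D}$ and a pairwise disjoint family $\{\Theta_k:k\in\mathbb N\}$ of infinite subsets of $\mathbb N$, and further subdivide each $\Theta_k=\bigsqcup_{r} I_r^k$ into infinite blocks. The cited example provides, for each $k$, a function $f_{\Theta_k}\in \mathcal H^\infty(B_{c_0})$ with $\|f_{\Theta_k}\|\le 1$ that depends only on the coordinates indexed by $\Theta_k$ and whose perturbation in a direction $e_n$ with $n\in I_r^k$ produces a contribution proportional to $a_r$.

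Given this, I set $F_\beta:=\sum_k \beta_k f_{\Theta_k}$ for $\beta\in\ell_1$. The bound $\|F_\beta\|\le \|\beta\|_1$ is immediate from $\|f_{\Theta_k}\|\le 1$ and the triangle inequality. For the almost isometric lower bound, fix $\varepsilon>0$; because $(a_j)$ is dense in $\overline{\mathbb D}$, I can choose, for each $k$, a finitely supported vector $x_k\in B_{c_0}$ supported in $\Theta_k$ such that $f_{\Theta_k}(x_k)$ is $\varepsilon/2^k$-close to the complex sign $\overline{\beta_k}/|\beta_k|$. The vector $x:=\sum_k x_k$ still lies in $B_{c_0}$ because the supports are disjoint, and on it the terms align to give $|F_\beta(x)|\ge (1-\varepsilon)\|\beta\|_1$; hence $\beta\mapsto F_\beta$ is an $\ell_1$-embedding of distortion arbitrarily close to~$1$.

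For the cluster-set property at an arbitrary $y\in B_{c_0}$, fix $\beta\neq 0$ and some $k$ with $\beta_k\neq 0$. Since $e_n\to 0$ weakly in $c_0$, for $n\in I_r^k$ tending to infinity the perturbations $y+(1-\|y\|)c\,e_n$ with $c\in\overline{\mathbb D}$ remain in $B_{c_0}$ and converge weak-$\ast$ to $y$ in $\ell_\infty=(c_0)^{**}$. Evaluating $F_\beta$ on such perturbations and using that $f_{\Theta_j}$ depends only on coordinates in $\Theta_j$, the computation mirrors that of Proposition~\ref{pps_eleuno} and produces
$$F_\beta(y)+\beta_k\, a_r\,\rho(y,c)\in Cl(F_\beta,y)\qquad(r\in\mathbb N,\ c\in\overline{\mathbb D}),$$
with a non-zero factor $\rho(y,c)$ depending only on $\|y\|$ and on the construction of $f_{\Theta_k}$. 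The density of $(a_j)$ in $\overline{\mathbb D}$ together with the compactness of $Cl(F_\beta,y)$ then forces a full disk centered at $F_\beta(y)$ to be contained in $Cl(F_\beta,y)$.

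The main obstacle is the construction of $f_{\Theta_k}$ itself. Unlike in the $\ell_p$ or $\ell_1$ setting, no $m$-homogeneous polynomial of fixed degree is bounded on $B_{c_0}$ and simultaneously produces disks in the cluster set at every point, because the canonical basis of $c_0$ admits no lower $q$-estimate. The construction from \cite{Aron4} circumvents this by exploiting that the coordinates of any $x\in c_0$ decay to zero, which forces a series of increasing-degree blocks to converge uniformly on $B_{c_0}$; verifying that this construction can be parallelized over disjoint supports $\Theta_k$ while retaining the independence between the blocks $I_r^k$ that yields $\rho(y,c)\ne 0$ is the delicate step, and is also the source of the loss of isometry (hence the word \emph{almost}).
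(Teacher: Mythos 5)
The core of this proposition is the construction of the building blocks $f_{\Theta_k}$, and you have left it out. The function from the example in \cite{Aron4} that the paper uses is the Blaschke-type infinite product $f_\Theta(x)=\delta^{-1}\prod_{j}(r_j-x_{n_j})/(1-r_jx_{n_j})$ with $r_j$ increasing rapidly to $1$ and $\delta=\prod_j r_j$; it is not ``a series of increasing-degree blocks'', and it uses neither the subdivision $\Theta_k=\bigcup_r I_r^k$ nor the dense sequence $(a_j)$ (one simply solves $(r_j-\lambda)/(1-r_j\lambda)=\mu$ to hit any prescribed $\mu\in\mathbb D$). Because the function is a product, perturbing a coordinate $n_j^k$ acts multiplicatively: letting $j\to\infty$ one gets $f_{\Theta_k}(y+z_j)\to\delta^{-1}\mu\,\delta_k$ with $\delta_k=\prod_\ell(r_\ell-y_{n_\ell^k})/(1-r_\ell y_{n_\ell^k})$, so the resulting disk in $Cl(F_\beta,y)$ is centered at $F_\beta(y)-\beta_k\delta^{-1}\delta_k$ (or, perturbing the first $N$ blocks simultaneously as the paper does, at $\sum_{k>N}\beta_kf_{\Theta_k}(y)$), \emph{not} at $F_\beta(y)$; your additive formula $F_\beta(y)+\beta_k a_r\rho(y,c)$ with $\rho$ depending only on $\|y\|$ is carried over from the homogeneous-polynomial cases and does not describe the $c_0$ construction. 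The conclusion (some disk in every cluster set) survives, but you have not verified it for any concrete function, and you yourself flag the ``delicate step'' as unresolved, so what you have is a plan rather than a proof. (Also, your parenthetical claim that no bounded $m$-homogeneous polynomial of fixed degree exists on $B_{c_0}$ is false as stated; what fails is the lower $q$-estimate needed for the diagonal series $\sum_j a_j x_{n_j}^m$ to define a bounded polynomial.)

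The lower bound contains a concrete error. Since $f_{\Theta_k}(0)=1$ and each M\"obius factor maps small coordinates to values near the positive number $r_j$, forcing $f_{\Theta_k}(x_k)$ to be $\varepsilon/2^k$-close to the unimodular number $\overline{\beta_k}/|\beta_k|$ requires $x_k$ to have a coordinate bounded away from $0$ in modulus whenever that sign is not close to $1$; consequently $x=\sum_k x_k$ has coordinates that do not tend to zero and need not lie in $c_0$ at all, so ``$x\in B_{c_0}$ because the supports are disjoint'' is not justified. The paper avoids this by perturbing only the first $N$ blocks, estimating the tail by $\sum_{k>N}|\beta_k|$, and letting $N\to\infty$, obtaining $\|\beta\|_1\le\|F_\beta\|\le\delta^{-1}\|\beta\|_1$; note that the ``almost'' in the statement comes from the normalizing factor $\delta^{-1}$ in the upper bound, not from any interaction between the blocks $I_r^k$ as you suggest.
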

\begin{proof}
Take $r_n < 1$ increasing rapidly to $1$ and let $\delta := \prod r_n$. For each infinite subset $\Theta =\{n_1 < n_2 < n_3 < \cdots\} \subset \mathbb{N}$ we define $f_\Theta : B_{c_0} \longrightarrow \mathbb{C}$ by
\begin{equation}\label{eq_ftheta_c0}
 f_\Theta(x) = \delta^{-1}\prod_{j\in\mathbb N} (r_j - x_{n_j})/(1 - r_j x_{n_j}).
\end{equation}
It is clear that $\|f_\Theta\| \le \delta^{-1}$. We now choose a family $\{\Theta_k : k \in \mathbb{N}\}$ of pairwise disjoint infinite subsets of $\mathbb{N}$ and, for each $k$, we consider the corresponding $f_{\Theta_k}$. The linear mapping  $\beta\in \ell_1 \mapsto F_\beta \in \mathcal{H}^\infty(B_{c_0})$ given by $$F_\beta(x)=\sum_{k=1}^\infty \beta_k f_{\Theta_k}(x)$$ is clearly well defined and continuous (in fact, $\|F_\beta\|\le \delta^{-1}\|\beta\|_{1}$).

We denote $\Theta_k := \{n^k_1 < n^k_2 < \cdots\}$ for each $k \in \mathbb N$. Suppose $\beta \not= 0$ and fix $N\in\mathbb N $ and  $\mu_1, \dots,\mu_N \in \mathbb{D}$ so that $(\beta_1, \ldots, \beta_N) \not= 0$. For  $k=1,\dots,N$ and $j \in \mathbb N$ we take $\lambda_{n^k_j}\in\mathbb D$ such that $\mu_k = (r_j - \lambda_{n^k_j})/(1 - r_j\lambda_{n^k_j})$. Moreover, for $y = (y_n) \in B_{c_0}$ we set
$$z_j^N := \sum_{i=1}^N \left[(1-|y_{n_j^i}|) \lambda_{n_j^i} - y_{n_j^i}\right] e_{n_j^i}.$$ Note that $(z_j^N)_{j=1}^\infty \subset B_{c_0} \cap (-y + B_{c_0})$ is a weakly null sequence and
\begin{eqnarray*}
F_\beta \left(y+z_j^N\right) &=& \sum_{k=1}^N \beta_k \delta^{-1} \frac{r_j - (1 - |y_{n_j^k}|)\lambda_{n_j^k}}{1 - r_j(1 - |y_{n_j^k}|)\lambda_{n_j^k}} \prod_{\ell \not= n_j} \frac{r_\ell - y_{n_\ell^k}}{1-r_\ell y_{n_\ell^k}} + \sum_{k=N+1}^\infty \beta_k f_{\Theta_k}(y) \\ &\xrightarrow{j \to \infty}&  \sum_{k=1}^N \beta_k \delta^{-1} \mu_k \delta_k + \sum_{k=N+1}^\infty \beta_k f_{\Theta_k}(y),
\end{eqnarray*}
where $\delta_k := \displaystyle \prod_{\ell=1}^\infty \frac{r_\ell - y_{n_\ell^k}}{1-r_\ell y_{n_\ell^k}}$. This implies that the disk centered at $\sum_{N+1}^\infty \beta_k f_{\Theta_k}(y)$ of radius $\|(\beta_k \delta^{-1} \delta_k)_{k=1}^N\|_1$ is contained in the cluster set of $F_\beta$ at $y$. On the other hand, for $y=0$ we have $\delta_k = \delta$ for each $k$, hence one may take $N \to \infty$ and conclude that $\|\beta\|_1\le \|F_\beta\|$.  Since $\delta$ can be taken arbitrarily close to 1, this completes the proof.
\end{proof}

Now we prove the algebratility part of Theorem~\ref{main-ell_p}

\begin{pps} Let $E$ be either $\ell_p$ ($1\le p <\infty$) or $d(w,p)^*$ ($1< p <\infty$) or $d_*(w,1)$ with $w\in \ell_s$ for some $1<s<\infty$.
  The set of functions $f\in A_u(B)$  whose cluster set at every $y \in B$ contains a disk is strongly $\mathfrak{c}$-algebrable.

\end{pps}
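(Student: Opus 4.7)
The plan is to apply the standard exponential trick to upgrade the spaceability result of the preceding proposition to strong $\mathfrak{c}$-algebrability. Fix, once and for all, a function $f\in A_u(B)$ supplied by the preceding proposition (for instance, take $f=F_\beta$ for some $\beta\in\ell_\infty\setminus\{0\}$), so that $f$ is bounded, uniformly continuous on $B$, and $Cl(f,y)$ contains a disk $D_y$ for every $y\in B$. Choose a $\mathbb{Q}$-linearly independent set $\mathcal{H}\subset(0,\infty)$ with $|\mathcal{H}|=\mathfrak{c}$, which exists since $\mathbb{R}$ has dimension $\mathfrak{c}$ as a $\mathbb{Q}$-vector space. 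For each $r\in\mathcal{H}$ set
\[
g_r := e^{rf}-1 = \sum_{n=1}^\infty \frac{(rf)^n}{n!}.
\]
Since $f$ is bounded, this series converges absolutely in the sup-norm, so $g_r\in A_u(B)$.

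The central computation concerns, for any nonzero polynomial $Q\in\mathbb{C}[X_1,\dots,X_k]$ with $Q(0)=0$ and distinct $r_1,\dots,r_k\in\mathcal{H}$, the entire function
\[
\phi_Q(\mu) := Q(e^{r_1\mu}-1,\dots,e^{r_k\mu}-1).
\]
Expanding each factor $(e^{r_i\mu}-1)^{\alpha_i}$ by the binomial theorem shows $\phi_Q$ to be a finite $\mathbb{C}$-linear combination of pure exponentials $e^{\lambda\mu}$, with $\lambda=\sum_i \beta_i r_i$ for multi-indices $\beta\in\mathbb{N}_0^k$ majorized by some $\alpha\in\operatorname{supp}(Q)$. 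The $\mathbb{Q}$-linear independence of $r_1,\dots,r_k$ guarantees that distinct $\beta$'s produce distinct frequencies $\lambda$. If $\alpha^*\in\operatorname{supp}(Q)$ maximizes $\sum_i\alpha_i r_i$, then the frequency $\lambda^*=\sum_i\alpha^*_i r_i$ receives only the contribution $c_{\alpha^*}\neq 0$; hence $\phi_Q\not\equiv 0$, and since $\phi_Q(0)=Q(0,\dots,0)=0$, $\phi_Q$ is non-constant.

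From $\phi_Q$ to cluster sets: for every $y\in B$, a net $(x_\alpha)\subset B$ weak-star convergent to $y$ with $f(x_\alpha)\to\mu$ satisfies $Q(g_{r_1},\dots,g_{r_k})(x_\alpha)\to\phi_Q(\mu)$ by continuity of the exponential, whence $\phi_Q(Cl(f,y))\subset Cl(Q(g_{r_1},\dots,g_{r_k}),y)$. Since $\phi_Q$ is a non-constant entire function and $D_y\subset Cl(f,y)$ is open, the open mapping theorem implies $\phi_Q(D_y)$ is open in $\mathbb{C}$ and therefore contains a disk. In particular $Q(g_{r_1},\dots,g_{r_k})\neq 0$ and lies in $\mathcal{E}_B(B)\cap A_u(B)$. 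Applying the same argument to $\tilde Q=Q-Q(0)$ for an arbitrary polynomial $Q$ yields the algebraic independence of $\{g_r:r\in\mathcal{H}\}$ in the sense of the paper, and the preceding display shows that the sub-algebra generated by $\{g_r:r\in\mathcal{H}\}$ is contained in $(\mathcal{E}_B(B)\cap A_u(B))\cup\{0\}$.

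The main technical point is the claim that $\phi_Q\not\equiv 0$: a priori nothing prevents interior cancellations among the many exponential terms, and the argument relies on the $\mathbb{Q}$-linear independence of the $r_i$'s together with the maximality of $\alpha^*$ to isolate a surviving leading coefficient. Once this is secured, everything else follows routinely from the preceding proposition and the one-variable open mapping theorem.
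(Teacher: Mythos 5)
Your proof is correct, but it follows a genuinely different route from the paper's. You use the classical ``exponential-like functions'' trick: starting from a \emph{single} $f\in A_u(B)$ all of whose cluster sets on $B$ contain a nondegenerate disk (supplied by the preceding propositions in every case of the statement), you form $g_r=e^{rf}-1$ for $\mathbb{Q}$-linearly independent exponents $r>0$. The key points check out: $A_u(B)$ is a closed subalgebra of $\mathcal{H}^\infty(B)$, so the exponential series stays in $A_u(B)$; the $\mathbb{Q}$-linear independence makes the frequencies $\sum_i\beta_ir_i$ pairwise distinct, and positivity of the $r_i$ ensures that the top frequency $\lambda^*=\sum_i\alpha^*_ir_i$ can only come from $\beta=\alpha^*\le\alpha=\alpha^*$, so its coefficient is exactly $c_{\alpha^*}\neq 0$ and $\phi_Q\not\equiv 0$; and the inclusion $\phi_Q(Cl(f,y))\subset Cl(Q(g_{r_1},\dots,g_{r_k}),y)$ together with the one-variable open mapping theorem applied to $\phi_Q$ on the nondegenerate disk $D_y$ finishes both the algebraic independence and the containment of the generated algebra in the target set. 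The paper proceeds differently: it takes $\mathfrak{c}$ generators $g_i=f_{\Theta_i}$ indexed by an almost disjoint family $\{\Theta_i\}_{i\in I}$ of subsets of $\mathbb{N}$ and proves a simultaneous perturbation property --- for any $y\in B$, any finite subfamily and any targets $a_1,\dots,a_n\in\mathbb{D}$ there is one weakly null net $(w_\alpha)$ with $g_{i_j}(y+(1-\|y\|)w_\alpha)\to g_{i_j}(y)+Ca_j$ for all $j$ at once --- so that the cluster set of $Q(g_{i_1},\dots,g_{i_n})$ contains the image of a non-constant analytic map on the polydisk $\mathbb{D}^n$. Your argument is more modular and would apply verbatim to any uniform algebra containing a single function with uniformly large cluster sets, at the price of generators that are transcendental expressions in $f$ rather than the explicit $m$-homogeneous polynomials the paper obtains, which keep its algebrability construction aligned with the spaceability statements.
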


\begin{proof}

Let $\{\Theta_i : i \in I\}$ be a family of infinite subsets of $\mathbb{N}$ such that $I$ has cardinality  $\mathfrak c$,  $\mathbb{N} = \bigcup_{i \in I} \Theta_i$ and such that, for  $i \not= j$, the set $\Theta_i \cap \Theta_j$ is finite. For each $i \in I$, let $g_i:=f_{\Theta_i}$, where $f_{\Theta_i}$ is the function defined in \eqref{eq_ftheta} for $E=\ell_p$, $1<p<\infty$, or \eqref{eq_ftheta_eleuno} for $E=\ell_1$.

Assume for a moment that  $G = \{g_i : i \in I\}$ satisfies the following condition (which we will prove later):
\begin{quotation}
 \noindent given $g_{i_1}, \ldots, g_{i_n}\in G$ with all different indices $i_1, \ldots, i_n \in I$, $y \in B_E$, and  $a_1, \ldots, a_n \in \mathbb{D}$, there exist a weakly null net $(w_\alpha)$ in $B_E$ and a constant $C = C(n,y) > 0$ such that \begin{equation}\label{eq_condition}\lim_{\alpha}g_{i_j}\left(y+ (1-\|y\|) w_\alpha \right) = g_{i_j}(y) + C\, a_j \quad j=1,\dots,n.\end{equation}
\end{quotation}

Let us see that $G$ is an algebraically independent set. Suppose that for some polynomial $Q \in \mathbb{C}[x_1, \ldots, x_n]$ we have $Q(g_{i_1}, \ldots, g_{i_n}) = 0$ for $g_{i_1}, \ldots, g_{i_n} \in G$ with distinct indices. For each $a := (a_1, \ldots, a_n) \in \mathbb{D}^n$, we take the net $(w_\alpha)$ in $B_E$ satisfying \eqref{eq_condition} with $y=0$. Hence,
	$$Q(Ca_1, \ldots, Ca_n) = \lim_{\alpha} Q(g_{i_1}(w_\alpha), \ldots, g_{i_n}(w_\alpha)) = 0,$$
	which implies $Q = 0$ on the open set $(C\mathbb{D})^n$, and therefore $Q = 0$ on $\mathbb{C}^n$.
	
Now we show that any nonzero $f$ in the algebra spanned by $G$ has a disk in its cluster set at every $y \in B_E$. Let $Q \in \mathbb{C}[x_1, \ldots, x_n] \setminus \{0\}$ be a polynomial without constants and let $f=Q(g_{i_1}, \ldots, g_{i_n})$ for all different $i_1, \ldots, i_n \in I$. If $y \in B_E$ is fixed and $C=C(n,y)$ is as above, we may consider the analytic function
	$$h_y : a = (a_1,\ldots,a_n) \in \mathbb{D}^n \mapsto Q(g_{i_1}(y) + C\, a_1, \ldots, g_{i_n}(y) + C\, a_n).$$
For each $a \in \mathbb{D}^n$ we use \eqref{eq_condition} again to get a weakly null net $(w_\alpha) \subset B_E$  such that
\begin{eqnarray*}
h_y(a) &=& \lim_{\alpha} Q(g_{i_1}(y + (1-\|y\|)w_\alpha), \ldots, g_{i_n}(y + (1-\|y\|)w_\alpha)) \\ &=& \lim_{\alpha} f(y + (1-\|y\|)w_\alpha).
\end{eqnarray*}
This means that the image of $h_y$ is contained in $Cl(f,y)$. It remains to notice that $h_y$ is not constantly 0 (this is the algebraic independence shown above) so there is a disk centered at $h_y(0)$ in the image of $h_y$ and, then, in the cluster set of $f$ at $y$.

\medskip
It remains to show \eqref{eq_condition}. We do it for $E=\ell_1$, the other cases being even simpler (the nets are actually sequences). We choose $m=2$ in \eqref{eq_ftheta_eleuno} (any even number works). For each $j=1,\dots,n$ we know (proof of Proposition \ref{pps_eleuno}) that there exists a net of the form $((e_{s^{j}_\alpha}-e_{t^{j}_\alpha})/2)_\alpha$ with all  $s^{j}_\alpha, t^{j}_\alpha\in \Theta_{i_j}$ satisfying
$$g_{i_j}\bigg(y + (1 - \|y\|_1)\frac {e_{s^{j}_\alpha}-e_{t^{j}_\alpha}}{2n}\bigg)=f_{\Theta_{i_j}} \bigg(y + (1 - \|y\|_1)\frac { e_{s^{j}_\alpha}-e_{t^{j}_\alpha}}{2n}\bigg) \to g_{i_j}(y) + \frac{(1-\|y\|_1)^2}{2n^2} \, a_j.$$
Note that we are using the same index set for the $n$ different nets. If $\alpha$ is large enough, all the involved elements will have disjoint supports. So, if we write $$w_\alpha =\frac 1 {2n}  \sum_{r=1}^{n} e_{s^{r}_\alpha}-e_{t^{r}_\alpha}, $$ we have for each $j=1,\dots,n$, $$g_{i_j}(y + (1 - \|y\|_1)w_\alpha) = \, g_{i_j}\bigg(y + (1 - \|y\|_1)\frac {e_{s^{j}_\alpha}-e_{t^{j}_\alpha}}{2n}\bigg) \to g_{i_j}(y) + \frac{(1-\|y\|_1)^2}{2n^2} \, a_j.$$ This gives \eqref{eq_condition} with $C=(1-\|y\|_1)^2/(2n^2)$.
\end{proof}

\subsection*{Acknowledgements}
The first author thanks the Departamento de Matem\'atica of Universidad de Buenos Aires and its members for their kind hospitality.

\bigskip
\noindent
\begin{tabular}{l}
	Thiago R. Alves\\
	Departamento de Matem\'{a}tica\\
	Instituto de Ci\^{e}ncias Exatas\\
	Universidade Federal do Amazonas\\
	69.077-000 -- Manaus -- Brazil\\
	e-mail: \tt{tralves.math@gmail.com}
\end{tabular}
\bigskip
\hspace{11mm}
\begin{tabular}{l}
	Daniel Carando\\
	Departamento de Matem\'atica\\
	Facultad de Cs. Exactas y Naturales\\
    Universidad de Buenos Aires\\
	and IMAS-UBA-CONICET, Argentina.\\
	e-mail: \tt{dcarando@dm.uba.ar}
\end{tabular}
\end{document}